    \newtheorem{theorem}{Theorem}[section]
    \newtheorem{proposition}[theorem]{Proposition}
    \theoremstyle{definition}
    \newtheorem{definition}[theorem]{Definition}
    \newtheorem*{assumptions*}{Assumptions}
\DeclareMathAlphabet\mathbfcal{OMS}{cmsy}{b}{n}
\begin{document}
\begin{frontmatter}

\title{Tipping in a Low-Dimensional Model of a Tropical Cyclone}

\author[1]{Katherine Slyman\corref{cor1}}
\cortext[cor1]{Corresponding Author}
\ead{katherine_slyman@brown.edu}
\author[2]{John A. Gemmer}
\author[3]{Nicholas K. Corak}
\author[4]{Claire Kiers}
\author[4,5]{Christopher K.R.T. Jones}

\address[1]{Division of Applied Mathematics, Brown University, Providence, RI 02906}
\address[2]{Department of Mathematics, Wake Forest University, Winston-Salem, NC 27109}
\address[3]{Department of Physics, Wake Forest University, Winston-Salem, NC 27109}
\address[4]{Department of Mathematics, University of North Carolina at Chapel Hill, Chapel Hill, NC 27599}
\address[5]{Department of Mathematics, George Mason University, Fairfax, VA 22030}

%\ead{gemmerj@wfu.edu}
%\ead{corank18@wfu.edu}
%\ead{ckrtj@renci.org}

\date{\today}

\begin{abstract}
A presumed impact of global climate change is the increase in frequency and intensity of tropical cyclones. Due to the possible destruction that occurs when tropical cyclones make landfall, understanding their formation should be of mass interest. In 2017, Kerry Emanuel modeled tropical cyclone formation by developing a low-dimensional dynamical system which couples tangential wind speed of the eye-wall with the inner-core moisture. For physically relevant parameters, this dynamical system always contains three fixed points: a stable fixed point at the origin corresponding to a non-storm state, an additional asymptotically stable fixed point corresponding to a stable storm state, and a saddle corresponding to an unstable storm state. The goal of this work is to provide insight into the underlying mechanisms that govern the formation and suppression of tropical cyclones through both analytical arguments and numerical experiments. We present a case study of both rate and noise-induced tipping between the stable states, relating to the destabilization or formation of a tropical cyclone. While the stochastic system exhibits transitions both to and from the non-storm state, noise-induced tipping is more likely to form a storm, whereas rate-induced tipping is more likely to be the way a storm is destabilized, and in fact, rate-induced tipping can never lead to the formation of a storm when acting alone. For rate-induced tipping acting as a destabilizer of the storm, a striking result is that both wind shear and maximal potential velocity have to increase, at a substantial rate, in order to effect tipping away from the active hurricane state. For storm formation through noise-induced tipping, we identify a specific direction along which the non-storm state is most likely to get activated.

\end{abstract}

% \begin{highlights}
% \item We derive a rate functional for the most probable path in a piecewise-smooth SDE
% \item We extend the Freidlin-Wentzell theory of large deviations to piecewise-smooth SDEs
% \item  SDEs in $n$ dimensions with piecewise-smooth and possibly non-autonomous drift apply
% \end{highlights}

\begin{keyword}
Dynamical systems \sep rate-induced tipping \sep  Freidlin-Wentzell rate functional \sep noise-induced tipping \sep rare events

\MSC 37H10 \sep 37J45
\end{keyword}

\end{frontmatter}

\section{Introduction}
\label{Intro}
Tipping is the rapid, and often irreversible, change in the state of a system \cite{ashwin2017parameter} and it is well understood that many elements of the climate system are particularly susceptible to tipping in some fashion \cite{Lenton08,lenton2019climate}. There are other reasons to be concerned about tipping with regards to climate change: it has played a role in the collapse of human societies, it exacerbates infectious disease spread and spillover risk, and it affects the severity of extreme weather events \cite{kemp_climate_2022}. While much of the recent mathematical research on tipping has focused on climate applications, see for instance \cite{berglund2002metastability, Eisenman09, Wieczorek10, Eisenman12, rothman2019characteristic, arnscheidt2020routes, lohmann2021risk}, it also has broad applications in  ecology \cite{vanselow2019very, o2020tipping}, ecosystems \cite{drake2010early, scheffer2008pulse}, epidemiology \cite{forgoston2011maximal, schwartz2011converging, Billings2018, hindes2018rare}, and social systems \cite{andreoni_predicting_2021,andreoni_social_2021}. %Given the magnitude of the impacts of these phenomena, a comprehensive understanding of tipping phenomena is needed to predict, and attempt to mitigate, these irreparable changes. 
Due to the diversity of important applications and the magnitude of the impacts of these phenomena, understanding the mathematics of tipping promises to have significant impact on many existing and recurring problems in our current society. 

We present a study of the determination and classification of tipping events for a low-dimensional model of tropical cyclones. In this system, tipping events can be loosely defined as occurring when a sudden or small changes to a variable or parameter induces a large change to the state of the system in a short amount of time, e.g. the formation or destabilization of a storm. More precisely, in \cite{Ashwin12} it was proposed that tipping events could be predominately classified and studied from a mathematical perspective, according to whether they are induced by a classical bifurcation (B-tipping), a rate dependent parameter (R-tipping), or by noise (N-tipping); see Figure \ref{fig:schematic} for a simplified schematic of these three classifications. These tipping mechanisms do not always act independently; a combination of different mechanisms can also lead to tipping. We specifically explore how parameter shifts and noise affect tipping within tropical cyclones. While in this work we provide a brief overview for these tipping mechanisms, we point the reader to \cite{Thompson11review, lenton2011early, Lenton12, ritchie2016early, ashwin2017parameter, Ashwin12, AshwinThresholds, forgoston2018primer, berglund2013kramers, freidlin2012random} for a more thorough discussion. 

Tropical cyclones, or hurricanes as they are referred to in the Northern Atlantic and Eastern Pacific basins, are complex storms characterized by their rapid rotation and heavy rains and are some of the most costly of natural disasters, both in terms of property damages and in lives lost \cite{mori2021recent}. Hurricane Dorian, one of the strongest tropical cyclones to make landfall in recent years, hit the Bahamas as a category $5$ hurricane, sustaining winds over $185$ miles per hour. It was responsible for an estimated $\$$7 billion in damages, over 400 dead or missing persons, and immeasurable losses to reef and mangroves, which in turn impacted tourism, the fishing industry, and protection from future storms \cite{dahlgren2020preliminary}. In 2005, Hurricane Katrina struck the gulf coast of Louisiana and was one of the costliest storms on record, causing over $\$$125 billion, and over 1,800 lives lost despite only sustaining winds of 127 mph upon landfall \cite{graumann2006hurricane}. Across the globe, tropical cyclones can cause even more damage. It is estimated that the Philippines spend 5$\%$ of its GDP per year on damages from typhoons (tropical cyclones of the  Pacific basin)\cite{bengtsson2001hurricane}. A presumed impact of global climate change is the increase in frequency and intensity of tropical cyclones, e.g., if waters warm north of the equator, it could impact the frequency of tropical cyclone development and, in turn, locations of landfall. As an example, consider Hurricane Lorenzo of 2019 which made landfall in Ireland and was the easternmost Category 5 hurricane on record having impacts across the Atlantic \cite{zelinsky2019tropical}. Because of the destruction that can occur when tropical cyclones make landfall, understanding what mechanisms lead to their formation should be of interest to governments, risk analysts, as well as climate scientists.

\begin{figure}[h]
    \centering
    \includegraphics[scale=.2]{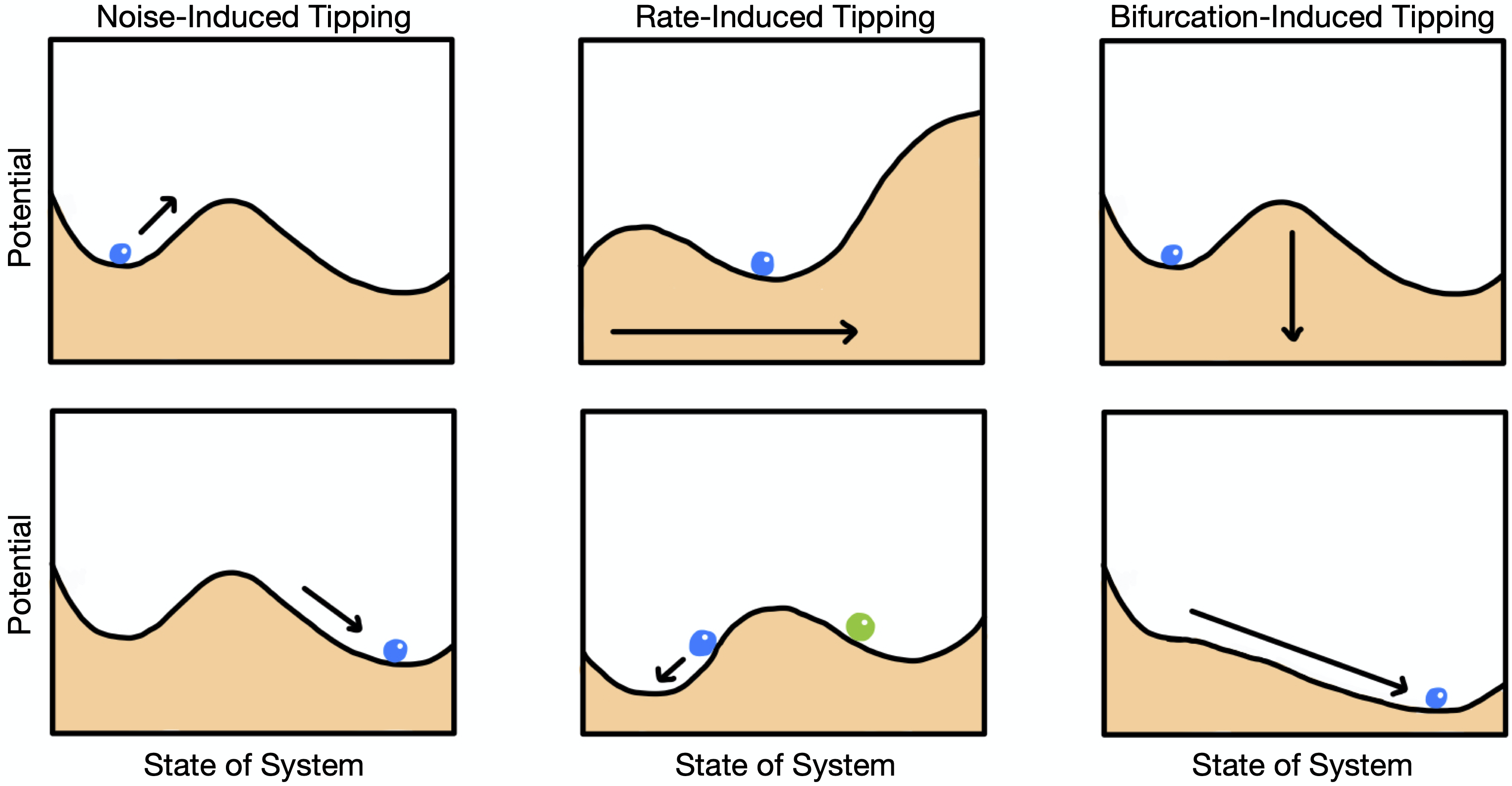}
    \caption{Schematic diagram for noise, rate, and bifurcation-induced tipping in gradient systems when initializing with a particle at a minimum of the potential. For noise-induced tipping, the random fluctuations are needed for the system to overcome the energy barrier to move to another local minimum. In rate-induced tipping, the potential is moved horizontally at a quick enough rate so that particle enters the basin of attraction of another local minimum. In bifurcation-induced tipping, the changing parameter eliminates the energy barrier allowing to particle to move to foll to the other local minimum. This figure is inspired and then recreated from \cite{van_der_bolt_understanding_2021}.}
    \label{fig:schematic}
\end{figure}

\subsection{Description of Model}

Tropical cyclones can be modeled as an axisymetric vortex in hydrostatic equilibrium with a rotational velocity resulting from conservation of angular momentum \cite{emanuel1991theory, emanuel2006hurricanes}. Specifically, tropical cyclones form over warm water, generally between the Tropics of Capricorn and Cancer, in which there is a temperature gradient between the warm ocean and cooler lower atmosphere. Essentially, as warm water evaporates, the resulting warm air mass rises and cools rapidly releasing heat through condensation back into the atmosphere. As the warm air rises, an area of low pressure forms and air begins to move from all directions to fill this void. The air in this region swirls from the Coriolis effect and, due to conservation of angular momentum, eventually forms a rotating air mass around the area of low pressure, i.e., the eye of the storm. 

Formulating a model capturing this effect, \cite{Eman2012} derived an equation for the tangential wind speed $V\geq 0$ resulting from the competition between the dissipation of kinetic energy and the power generated by the storm:
\begin{equation}
\frac{dV}{dt}=\frac{1}{2}\frac{C_d}{h}\left(V_{p0}^2-V^2\right), \label{Eqn:Intro:IdealModel}
\end{equation}
where $C_d/h>0$ has units of inverse length and couples the effect of surface drag and ocean boundary layer depth, i.e. the top depth of ocean that interacts with the bottom layer of the atmosphere \cite{emanuel2006hurricanes}. The maximum potential velocity of the hurricane, $V_{p0}>0$, is a parameter that is calculated by modeling the storm as a Carnot engine and equating the kinetic energy with the theoretical maximum power that could be sustained by the storm \cite{emanuel2006hurricanes}.

The model presented in Equation \eqref{Eqn:Intro:IdealModel} is limited in its applicability, as it does not account for environmental wind shear, dissipative heating, and surface saturation specific humidity. A more realistic model accounting for these effects was developed in the work of Emanuel and Zhang \cite{emanuel2017role}, and Emanuel \cite{emanuel_fast_2017}, and is given by
\begin{equation}
\begin{aligned}
\frac{dV}{dt} &=\frac{1}{2}\frac{C_d}{h}[(1-\gamma) V_p^2m^3-(1-\gamma m^3)V^2], \\
\frac{dm}{dt} &=\frac{1}{2}\frac{C_d}{h}[(1-m)V-2.2Sm],
\end{aligned} \label{Eqn:DimensionAut}
\end{equation}
where $\gamma\in [0,1]$ is a dimensionless parameter accounting for dissipative heating and pressure dependence of the surface saturation humidity, $V_p^2=(1-\gamma)^{-1}V_{p0}^2$, and $S$ is the wind shear measured in units of velocity. More specifically, $\gamma=(T_A-T_0)/T_0+\kappa$, where $T_A,T_0$ are the temperatures of the lower atmosphere and upper ocean respectively, $\kappa$ is a constant and thus $\gamma^{-1}$ is a proxy for the temperature of the ocean. Here, the dependent variable $m$ can be thought of as relative humidity, thus dimensionless and satisfies $m\in [0,1]$. In this model, $m$ serves as a ``fuel" for the tropical storm, and indeed if $m=1$, i.e. the core is fully saturated, we recover Equation \eqref{Eqn:Intro:IdealModel}. In contrast, $S$ is coupled in such a way as to reflect wind shear's role in pulling moisture out of the storm and thus possibly dissipating it. This form of the model, in particular the factor of $2.2$ and the cubic nonlinearities, are empirical and are the result of numerical experimentation \cite{emanuel2017role}. Note, in \cite{emanuel2017role, emanuel_fast_2017} this model is presented with an ocean feedback term accounting for the fact that high velocity storms begin to pull up ocean water which cools the hurricane. This additional term does not significantly impact the qualitative behavior of this model and thus to simplify the analysis we have chosen to neglect it. Note, for physically relevant parameters and nonzero wind shear, this dynamical system always contains a stable fixed point at the origin $(V=0,m=0)$ corresponding to a non-storm state and for sufficiently low wind shear, an additional asymptotically stable fixed point corresponding to a stable storm state, and a saddle corresponding to an unstable storm state.  

\subsection{Summary of Key Results and Organization of Paper}

In Section \ref{DetModel}, we examine the dimensionless version of Equation \eqref{Eqn:DimensionAut}, lay out the parameter regimes of interest, perform a standard bifurcation analysis, and study the qualitative behavior of the model. 

In Section \ref{Rate}, we present the basic theoretical framework for rate-induced tipping, study the possibility of rate-induced transitions between the non-storm and stable storm states and conclude this section with a numerical example that illustrates rate-induced tipping. Creating a storm by tipping from the non-storm state to the stable storm state, is never possible through rate-induced tipping alone due to the fact the non-storm state is stationary with respect to changes in parameters.

We prove that the deterministic system undergoes rate-induced tipping away from the stable storm state to the non-storm state. This will occur when the max potential velocity and the dimensionless wind shear both increase at a sufficiently high rate. A surprising and very interesting aspect of this result is that both maximal velocity and wind shear need to increase to effect the tipping. It is not surprising that wind shear needs to increase as it is well-known that a lack of wind shear is necessary to support a hurricane. The maximum potential velocity of the storm is a proxy for the energy available to the storm and it is counter-intuitive that it should have to increase to force a storm to end, rather than the other way around. While this may be revealing a hidden flaw in such a low-dimensional model, it may also be a genuine effect in that increasing the maximum potential velocity is implicitly requiring the storm to be stronger to survive and if this requirement sets in too quickly then the storm may be unable to adjust. Therefore, in this model at least, a hurricane can destabilize due to rapidly changing parameters, but can never form.
 
In Section \ref{Noise}, we consider the addition of noise to the system and investigate transitions between the non-storm and stable storm states. We show the stochastic system exhibits tipping to and from the non-storm state, implying a hurricane can form or destabilize with the addition of random fluctuations acting on the system. The primary mathematical tool we use is the Freidlin–Wentzell theory of large deviations to determine the most probable transition path between states. In this framework, the most probable transition path can be computed as the minimum of an action functional which satisfies a Hamiltonian system of differential equations. By exploiting the Hamiltonian structure of these equations we are able to compute an asymptotic formula for the most probable transition path from the non-storm state to the stable storm state which further allowed us to estimate the expected tipping time from the non-storm state. To validate this approximation we compared our approximation with a direct gradient flow of the action functional. Monte Carlo simulations reveal the accuracy of the most probable path, and also demonstrate the system's susceptibility to tipping. 

However, while it is truly a rare event for noise to tip the system from the stable storm state to the non-storm state, we show that the stochastic system is highly susceptible to tip from the non-storm state to the stable storm state. Both analytical arguments and numerical experiments show that noise-induced tipping is needed (and likely) to form a storm whereas rate-induced tipping is a favored mechanism for destabilizing a storm. 

Lastly, in Section \ref{Summary}, we discuss the implications and key significance of this work and 
discuss the interplay between the rate and noise-induced tipping mechanisms via numerical
simulations. When considering the stochastic system, and also allowing both the max potential velocity and the dimensionless wind shear to be time-dependent parameters, the system again exhibits tipping to and from the non-storm state. When the system tips from the non-storm state to the stable storm state, the two tipping mechanisms act independently: noise-induced tipping occurs before the ramp begins and then the system end-point tracks the stable storm state, but when the system tips from the stable storm state to the non-storm state, there is an interplay of the two tipping mechanisms.

\section{Analysis of Autonomous System and Bifurcation-Induced Tipping} \label{DetModel}

To give context to the tipping results, we first perform a standard bifurcation analysis of Equation \eqref{Eqn:Intro:IdealModel} as well as study the qualitative behavior of the model. To do so, it convenient to introduce the dimensionless variables $v=V/V_p$ and $\tau=C_d/(2h)V_p t$ which yields the dimensionless system given by
\begin{equation}
\begin{aligned}
    \frac{dv}{d \tau} &= f(v,m):=(1-\gamma)m^3-(1-\gamma m^3)v^2, \\
   \frac{dm}{d \tau} &= g(v,m):=(1-m)v-c m,
\end{aligned} \label{Eqn:DimensionlessAut}
\end{equation}
where $c=2.2S/V_p$ and is the dimensionless wind shear. Note, from the discussion of this model in the introduction, we expect that $0\leq V\leq V_p$ and $0\leq m \leq 1$ and thus the relevant phase space on which the forward flow of Equation \eqref{Eqn:DimensionlessAut} should be defined on is $\Pi=[0,1]\times [0,1]$. Indeed, since $\gamma\in (0,1)$ it follows that $f(0,m)\geq 0$, $f(1,m)\leq 0$, $g(v,0)\geq 0$, $g(v,1)\leq 0$ and thus $\Pi$ is forward invariant. 

The fixed points of Equation \eqref{Eqn:DimensionlessAut} satisfy the system of equations $m=v/(v+c)$ and $q(v)=v^2p(v)=0$, where $p$ is the cubic polynomial
\begin{equation}
    p(v)=(1-\gamma)v+\gamma v^3-(v+c)^3. \label{poly}
\end{equation}
Therefore, regardless of parameter values, the origin $\mathcal{O}=(0,0)$ (non-storm state) is a fixed point of Equation \eqref{Eqn:DimensionlessAut}. However, $v=0$ is a repeated root of the quintic polynomial $q$ and thus $\mathcal{O}$ is not a hyperbolic fixed point. Nevertheless, it can be shown through a center manifold reduction that the origin is in fact a stable fixed point; see Appendix B. 

Additional fixed points for Equation \eqref{Eqn:DimensionlessAut} can exist depending on the parameter values. Specifically, since $\lim_{v \rightarrow \infty} p(v)=-\infty$ and $p(0)=-c^3$, it follows that, including repeated roots, $p$ can either have zero or two positive roots. Consequently, this system can exhibit a saddle-node bifurcation in which a stable node $\mathcal{S}$ (stable storm state) and a saddle $\mathcal{U}$ (unstable storm state) emerge or disappear when varying a parameter; see Figure \ref{Fig:bifurcation}(a) for an example bifurcation diagram in the parameter $c$. The saddle-node bifurcation occurs when the local maximum $v^*$ of $p(v)$ intersects the $v$-axis, i.e., $p(v^*)=0$. Calculating $v^*$ we find 
\begin{equation}
v^*=\frac{-c+\sqrt{\frac{1}{3}(1-\gamma)^2+c^2\gamma}}{1-\gamma},
\end{equation}
and thus as functions of $c$ and $\gamma$ the curve along which the bifurcations occur are given as the $0$-level curve of $p(v^*)$. In Figure \ref{Fig:bifurcation}(b) we plot a ``phase diagram'' in which the the $0$-level curve of $p(v^*)$ partitions the $(c,\gamma)$ plane into regions in which a stable storm state does (labeled Existence of Storm State) or does not exist (labeled Existence of Non-Storm State). 

\begin{figure}[t!]
\centering
\begin{subfigure}[b]{0.4\textwidth}
    \centering
    \includegraphics[width=.95\textwidth]{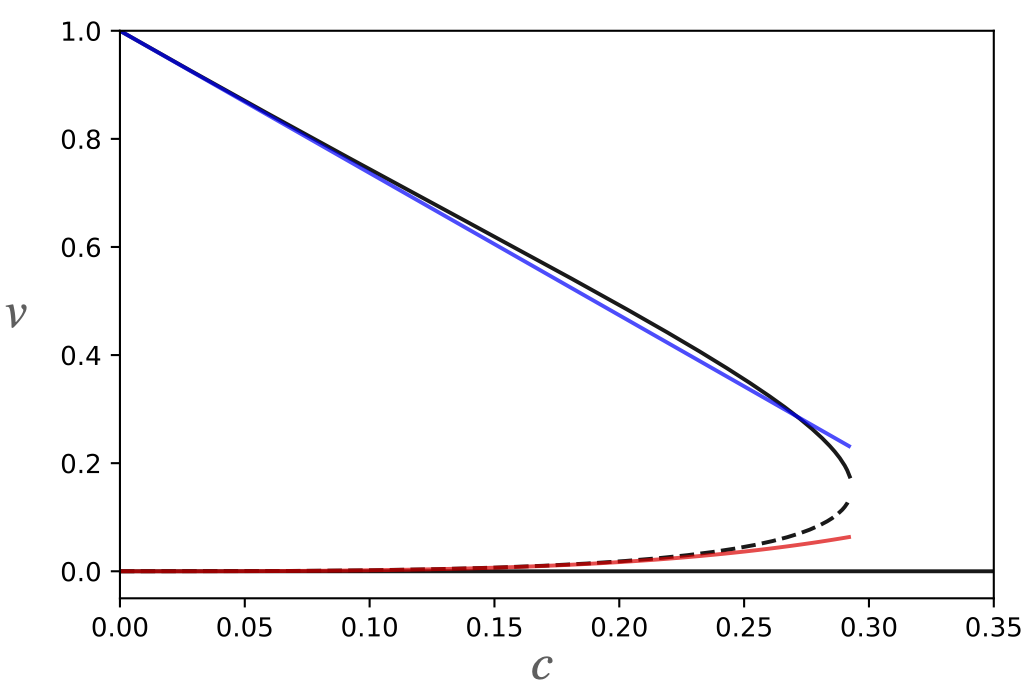}
    \caption{}
\end{subfigure}
\begin{subfigure}[b]{0.4\textwidth}
    \centering
    \includegraphics[width=.95\textwidth]{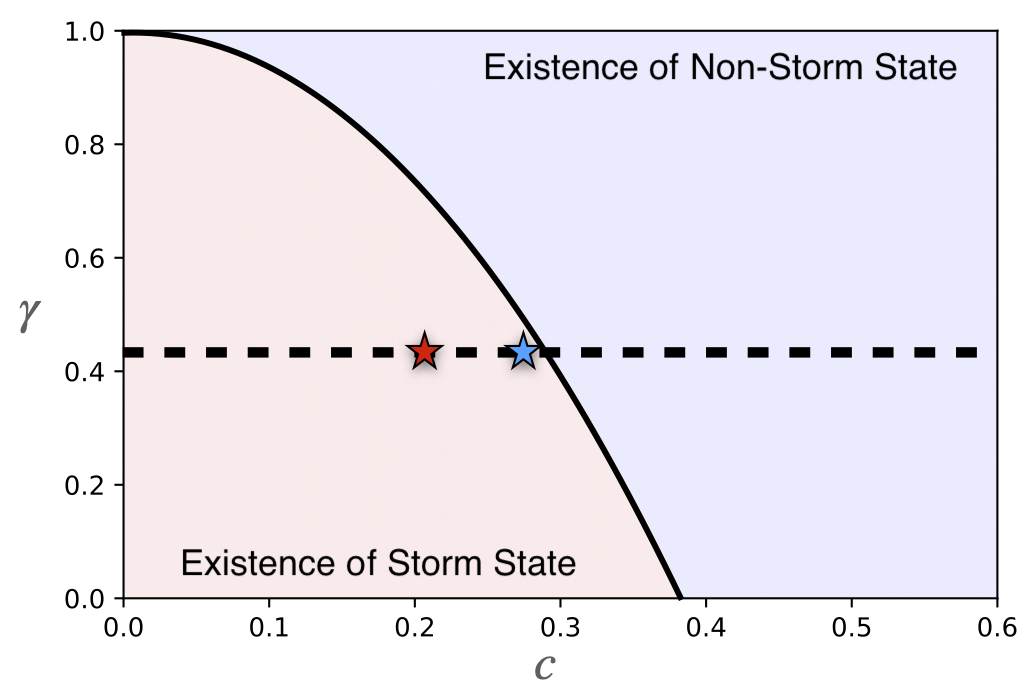}
    \caption{}
\end{subfigure}\\
\begin{subfigure}[b]{.4\textwidth}
    \centering
\includegraphics[width=.95\textwidth]{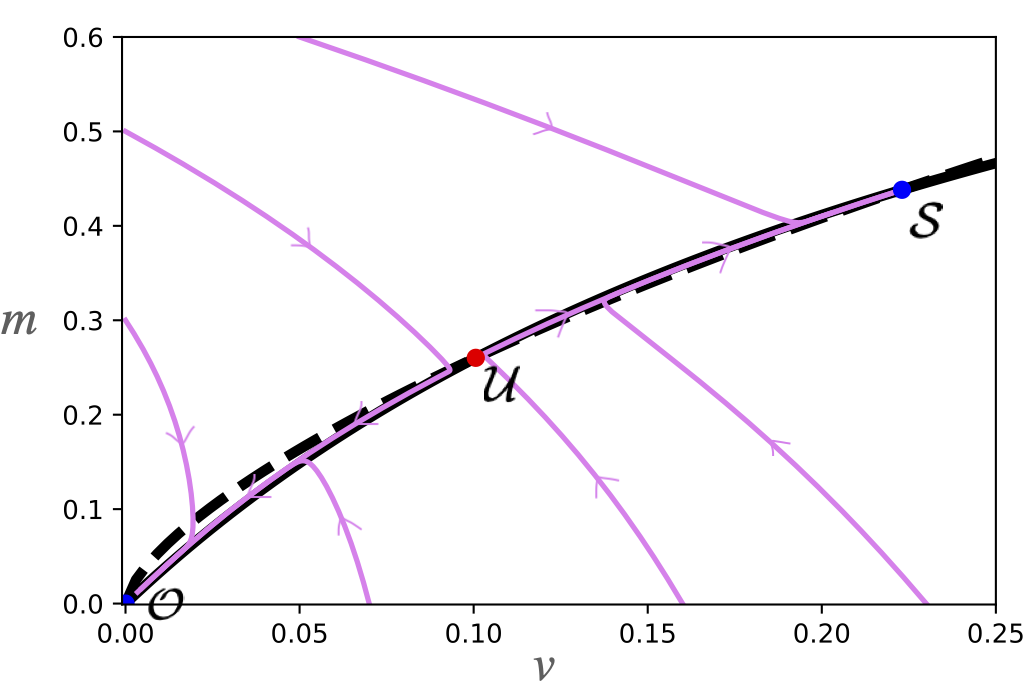}
\caption{}
\end{subfigure}
\begin{subfigure}[b]{.4\textwidth}
    \centering
\includegraphics[width=.95\textwidth]{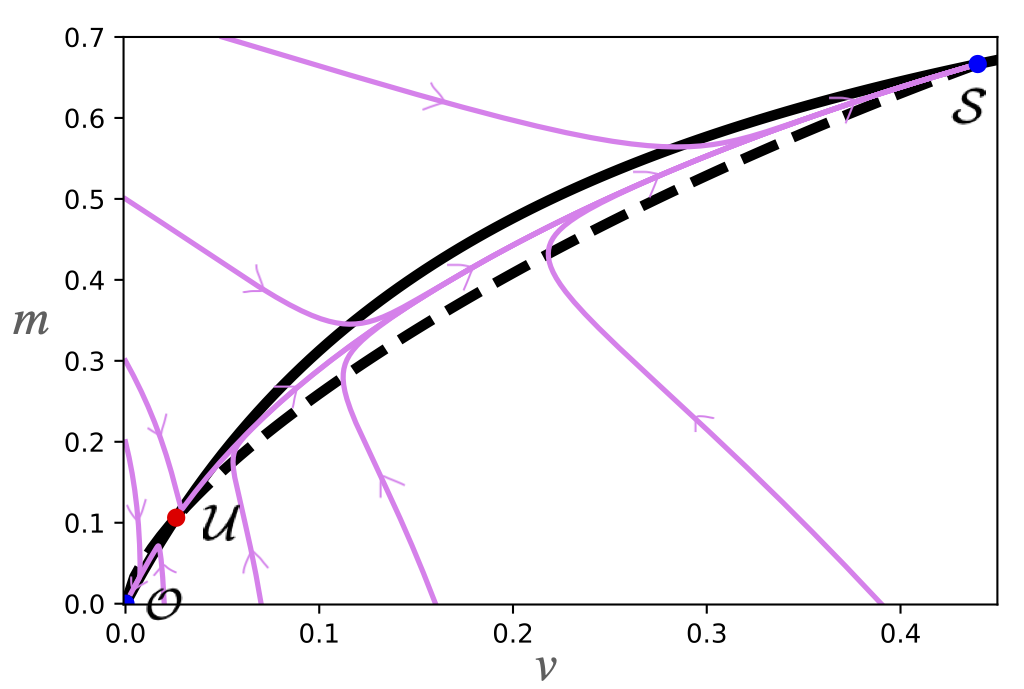}
\caption{}
\end{subfigure}
\begin{subfigure}[b]{.4\textwidth}
    \centering
\includegraphics[width=.95\textwidth]{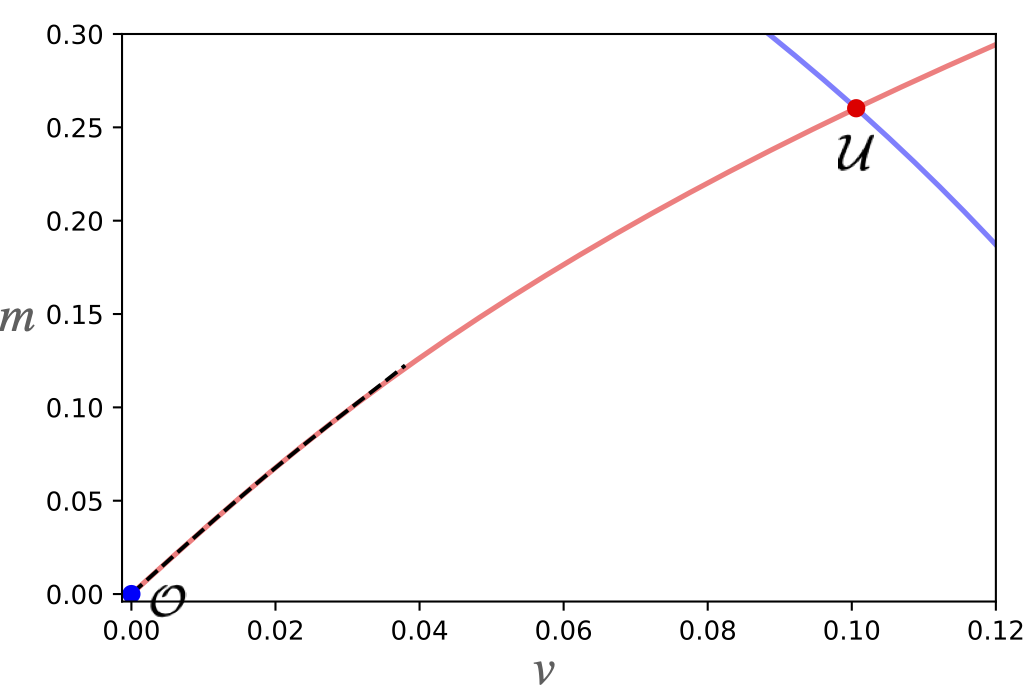}
\caption{}
\end{subfigure}
\begin{subfigure}[b]{.4\textwidth}
    \centering
\includegraphics[width=.96\textwidth]{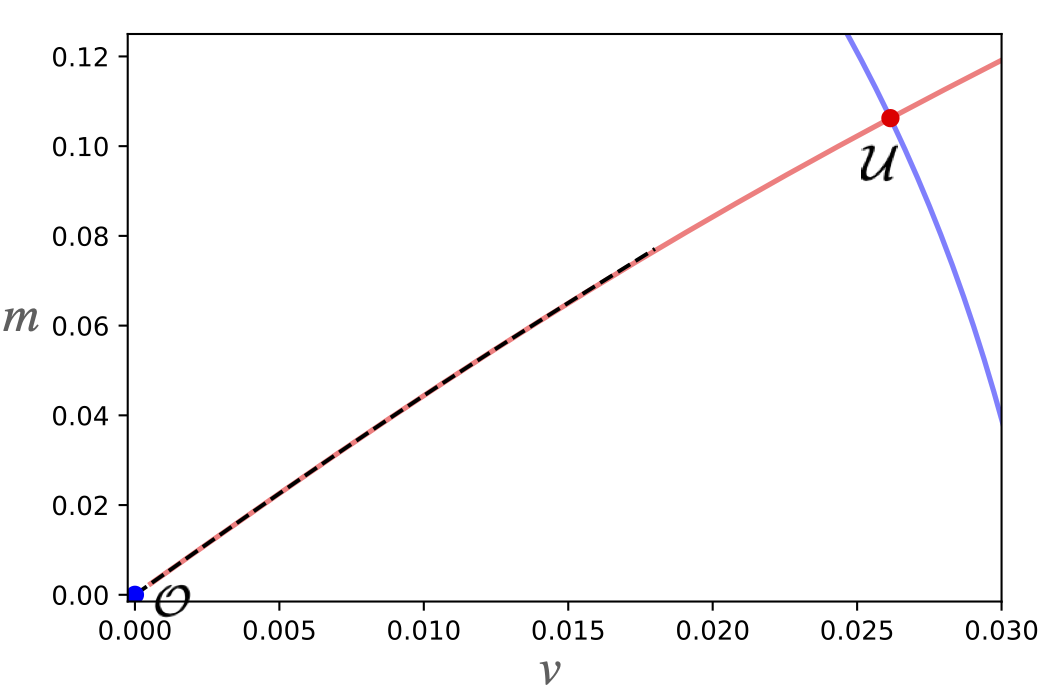}
\caption{}
\end{subfigure}
\caption{(a) Bifurcation diagram for Equation \eqref{Eqn:DimensionlessAut}  as a function of dimensionless wind shear $c$ with $\gamma=0.43$. There are three possible fixed points: the asymptotically stable origin $\mathcal{O}$ (non-storm state), another asymptotically stable fixed point $\mathcal{S}$ (stable storm state), and a saddle $\mathcal{U}$ (unstable storm state).  (b) Phase diagram for Equation \eqref{Eqn:DimensionlessAut} in the $(\gamma,c)$ plane in which the regions corresponding to the existence or non-existence of $\mathcal{S}$ and $\mathcal{U}$ are labeled as Existence of Storm State and Non-Storm State respectively. (c-d) Phase portraits for Equation \eqref{Eqn:DimensionlessAut} with $(\gamma,c)=(0.43,0.286)$ and $(\gamma,c)=(0.43,0.22)$ respectively. The blue circles correspond to the stable fixed points $\mathcal{S}$ and $\mathcal{O}$ and the red square corresponds to the saddle node $\mathcal{U}$. The dashed and solid black lines correspond to the nullclines $\dot{v}=0$ and $\dot{m}=0$ respectively. (e-f) Magnifications of the phase portraits in Figures \ref{Fig:bifurcation}(c-d) near the origin. The red (blue) lines correspond to the unstable (stable) manifold of $\mathcal{U}$ and the dashed line is the local approximation of the center manifold near the origin.}
\label{Fig:bifurcation}
\end{figure}

Figures \ref{Fig:bifurcation}(a) and \ref{Fig:bifurcation}(c) indicate that for $c\ll 1$, $\mathcal{U}$ remains close to the origin while the $v$-coordinate of $\mathcal{S}$ varies linearly in $c$. This result can be verified by an asymptotic expansion of the zeros of $p(v)$. Specifically, we assume an asymptotic expansion of a fixed point $(v^*,m^*)$ in the form
\begin{equation}
\begin{aligned}
v^*&=v_0+c^{\alpha_1}v_1+c^{\alpha_2}v_2+\ldots,\\
m^*&=\frac{v^*}{v^*+c},
\end{aligned}
\end{equation}
where $0<\alpha_1<\alpha_2<\ldots$. At lowest order in $c$ we obtain the cubic equation $v_0(1-\gamma)(1-v_0)^2=0$ and thus the non-negative roots are $v_0=0$ and $v_0=1$ which correspond to the lowest order approximations of $\mathcal{U}$ and $\mathcal{S}$ respectively. 
\begin{enumerate}
\item If we continue the expansion assuming $v_0=0$, we find that to balance terms we need $\alpha_1=3, \alpha_2=5$, $v_1=(1-\gamma)^{-1}$, $v_2=3(1-\gamma)^{-2}$ and therefore
\begin{equation}
m^*=\frac{c^3(1-\gamma)^{-1}+3c^5(1-\gamma)^{-2}+\ldots}{c+c^3(1-\gamma)^{-1}+3c^5(1-\gamma)^{-2}+\ldots}=\frac{c^2}{1-\gamma}+\frac{2c^4}{(1-\gamma)^2}+\ldots.
\end{equation}
\item Assuming $v_0=1$ we obtain a regular perturbation, i.e., $\alpha_1=1, \alpha_2=2, \ldots$, implying $v_1=-\frac{3}{2}(1-\gamma)^{-1}$, $v_2=-\frac{3}{8}(1-4\gamma)(1-\gamma)^{-2}$ and thus
\begin{equation}
m^*=\frac{1-\frac{3}{2}(1-\gamma)^{-1}c+\ldots}{1-\left(\frac{3}{2}(1-\gamma)^{-1}c-c\right)}=1-c+\ldots.
\end{equation}
\end{enumerate}
Therefore, the first two nonzero terms in the asymptotic expansions for $\mathcal{U}$ and $\mathcal{S}$ in $c$ are given by
\begin{equation}
\begin{aligned}
\mathcal{U}&=\left(\frac{c^3}{1-\gamma}+\frac{3c^5}{(1-\gamma)^2},\frac{c^2}{1-\gamma}+\frac{2c^4}{(1-\gamma)^2}\right)+\left(o(c^5),o(c^4)\right),\\
\mathcal{S}&=\left(1-\frac{3}{2}(1-\gamma)c, 1-c\right)+\left(o(c^2),o(c^2)\right).
\end{aligned} \label{Eqn:DeterministicAsyExpansionFP}
\end{equation}

The asymptotic expansion in Equation \eqref{Eqn:DeterministicAsyExpansionFP} indicates that for weak wind shear the separation in phase space between the non-storm state $\mathcal{O}$ and the unstable storm state $\mathcal{U}$ is small in comparison with the separation between $\mathcal{U}$ and the stable storm state $\mathcal{S}$. This separation is illustrated by the generic phase portraits of Equation \eqref{Eqn:DimensionlessAut} presented in Figures \ref{Fig:bifurcation}(c-d) for parameter values in which $\mathcal{S}$ exists. To indicate the basins of attraction for $\mathcal{O}$ and $\mathcal{S}$, denoted $\mathbb{B}(\mathcal{O})$ and $\mathbb{B}(\mathcal{S})$ respectively, in Figures \ref{Fig:bifurcation}(e-f) we plot the unstable and stable manifolds of $\mathcal{U}$ near the origin. As it will be important in Section \ref{Noise} when we discuss noise-induced tipping, note that the stable manifold of $\mathcal{U}$ forms a separatrix between $\mathbb{B}(\mathcal{O})$ and $\mathbb{B}(\mathcal{S})$.

A final feature of this dynamical system is the existence of a center manifold near $\mathcal{O}$ which is approximated to cubic order by
\begin{equation}
    m(v)=\frac{1}{c}v-\frac{1-\gamma}{c^5}v^3;
\end{equation}
see Appendix A for the derivation. This approximation is overlayed as a dashed curve in Figures \ref{Fig:bifurcation}(e-f). This manifold acts as a slow manifold near $\mathcal{O}$ in the sense that, near $\mathcal{O}$, the component of the vector field transverse to the manifold is much larger in magnitude than the component tangent to the manifold. Consequently, the center manifold provides a natural pathway for noise-induced transitions from $\mathcal{O}$ to $\mathcal{S}$ to occur; a conjecture we will verify in Section \ref{Noise}.

\section{Rate-Induced Tipping in the Tropical Cyclone Model} \label{Rate}
Of central interest is the possibility of transitions between the two stable states as the max potential velocity, $V_p$, and the dimensionless wind shear, $c$, vary in time. Rate-induced tipping is where a sufficiently quick change to a parameter of a system may cause the system to move away from one attractor to another, without undergoing a bifurcation \cite{ashwin2017parameter}. Essentially, the system is unable to track a continuously changing attractor if the parameter changes fast enough. 

\subsection{A Quick Introduction to Rate-Induced Tipping} \label{sec:quick intro rtip}

Following the work done by \cite{Ashwin12,ashwin2017parameter,AshwinThresholds}, we lay out the framework needed to describe rate-induced tipping more formally and also introduce notation we use throughout this work.

Consider the autonomous differential equation
\begin{equation}
    \dot{x}=f(x,\lambda),
\label{EQ: rate_tip_setup}
\end{equation}
where $x\in \mathbb{R}^n, \lambda \in \mathbb{R}^m, f\in C^2(\mathbb{R}^{m+n},\mathbb{R}^n), t \in \mathbb{R}$, and $\dot{x}$ is the derivative of $x$ with respect to time, $\frac{dx}{dt}$. Now, instead of a fixed $\lambda$, suppose that $\lambda$ changes in time. We replace  $\lambda$ with an external input $\Lambda_r(t)=\Lambda(rt) \in C^2(\mathbb{R},\mathbb{R}^m)$, $r \in \mathbb{R}>0$, and specifically assume that $\Lambda_r$ is bi-asymptotically constant. This implies that $\Lambda_r$ is a parameter shift that satisfies
\begin{center} $\displaystyle \lim_{t \rightarrow  -\infty} \Lambda_r(t)=\lambda_- \in \mathbb{R}^m$ and $\displaystyle \lim_{t \rightarrow  \infty} \Lambda_r(t)=\lambda_+ \in \mathbb{R}^m$,
\end{center}
where $\lambda_-$ the past limit state and $\lambda_+$ is the future limit state. In addition to assuming that $\Lambda_r$ is bi-asymptotically constant, assume that $\Lambda_r$ is monotonically increasing, and that 
\begin{center}$\lambda_-<\Lambda_r<\lambda_+$. \end{center}

\noindent These assumptions on $\Lambda_r$ allow a gradual transition between $\lambda_-$ and $\lambda_+$ in time, where the size of $r$, the rate parameter, determines how quickly $\Lambda_r$ transitions between $\lambda_-$ and $\lambda_+$. While there are different types of functions that fit this criteria, we use a transformation on a hyperbolic tangent function as $\Lambda_r$. Note that the external input $\Lambda_r$ can also be called a ramp function or a ramp parameter.

\begin{definition}
Suppose $\Lambda_r(t)$ is a bi-asymptotically constant external input, with future limit state $\lambda_+$ and past limit state $\lambda_-$. Suppose that for all $t \in \mathbb{R}$, $X(t)$ is a fixed point of Equation \eqref{EQ: rate_tip_setup} when $\lambda=\Lambda_r(t)$, $(t,X(t))$ is a connected curve, and when $\lambda=\lambda_{\pm}, X(t)=X^{\pm}$. Then $(t,X(t))$ is a \textit{stable (unstable) path} if $X(t)$ is an attracting (repelling) fixed point for all $t$. \textit{These paths can be referred to as a path of stable (unstable) fixed points in the frozen time system.}
\end{definition}

Replacing $\lambda$ with $\Lambda_r(t)$ in Equation \eqref{EQ: rate_tip_setup} leads to
\begin{equation}
\dot{x}=f(x,\Lambda_r(t)),
\label{EQ: rate_tip_setup_2}
\end{equation}
where $x\in \mathbb{R}^n, \Lambda_r(t) \in \mathbb{R}^m, f\in C^2(\mathbb{R}^{m+n},\mathbb{R}^n), t \in \mathbb{R}$, and $r \in \mathbb{R}$. 
Solution behaviors of Equation \eqref{EQ: rate_tip_setup_2} change for different values of $r$. Let $x^{r^*}(x_0)$ denote a solution of Equation \eqref{EQ: rate_tip_setup_2} initialized at $x=x_0$ for a given value of $r$. We define rate-induced tipping using the definition from \cite{AshwinThresholds} below.

\begin{definition}
Consider the nonautonomous system given by Equation \eqref{EQ: rate_tip_setup_2} with a bi-asymptotically constant external input $\Lambda_r(t)$, with future limit state $\lambda_+$ and past limit state $\lambda_-$. Suppose that when $\Lambda_r=\lambda_-$, the system has a hyperbolic sink $e^-$, and when $\Lambda_r=\lambda_+$, the system has a compact invariant set $\eta^+$ that is not an attractor. Equation \eqref{EQ: rate_tip_setup_2} undergoes \textit{rate-induced tipping} from $e^-$ if there are rates $0<r_2<r_1$ such that $\lim_{t \rightarrow \infty} x^{r_1} (e^-) \rightarrow \eta^+$ and $\lim_{t \rightarrow \infty} x^{r_2} (e^-) \not\rightarrow \eta^+$. The first value of $r$ such that $\lim_{t \rightarrow \infty} x^{r} (e^-) \rightarrow \eta^+$ is called a critical rate and is denoted by $r_c$.
\end{definition}

\begin{definition}
Suppose that when $\Lambda_r(t)=\lambda$ the system given by Equation \eqref{EQ: rate_tip_setup_2} has a hyperbolic sink $e^-$. We define the basin of attraction for $e^-$ as $\mathbb{B}(x,\lambda)=\{x \hspace{1mm} | \hspace{1mm} \lim_{t \rightarrow \infty} x(t) = e^-\}$ .
\end{definition}

\noindent Essentially, if $r<r_c$, solutions will end-point track the path of fixed points in the frozen time system they were initialized on. However, when $r=r_c$, we have tipped to the basin boundary of $e^-$, and when $r>r_c$, we either tip to infinity or a different attractor, as it left the basin of attraction for $e^-$. It is worth mentioning that not all choices of $\Lambda_r$ result in rate-induced tipping. There is theory to let us know if a system will or will not tip with a chosen $\Lambda_r$, and the conditions change based on the dimension of the system. Nevertheless, the conditions for a system to undergo rate-induced tipping are the same for arbitrary dimension. In particular, rate-induced tipping can occur in the system given by Equation \eqref{EQ: rate_tip_setup_2} if the starting base state satisfies a sufficient condition called \textit{forward threshold unstable} when the parameter shift is applied to it \cite{AshwinThresholds}. Condensing the results of the work by \cite{ashwin2017parameter,Kiers.2019}, the following theorem is relevant for the analysis in this section.

\begin{theorem}
Suppose $\Lambda_r$ gives rise to a stable path $(t,X(t))$ in Equation \eqref{EQ: rate_tip_setup_2} with $x \in \mathbb{R}^n$, for $n \geq 1$, and $X^{\pm}=\lim_{t \rightarrow \infty} X(t)$. When $\Lambda_r=\lambda_-$, assume the system has an attracting equilibrium $X^-$. If there is a $Y^+ \neq X^+$ such that $Y^+$ is an attracting equilibrium of Equation \eqref{EQ: rate_tip_setup} for $\lambda=\lambda^+$, and $X^- \in \mathbb{B}(Y^+,\lambda^+)$, then there is rate-induced tipping away from $X^-$ to $Y^+$ for this $\Lambda_r$, for sufficiently large $r > 0$.
\label{THM: rate induced conditions}
\end{theorem}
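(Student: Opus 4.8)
The plan is to prove the theorem by a fast-ramp (singular) limit argument: as $r\to\infty$ the ramp $\Lambda_r(t)=\Lambda(rt)$ becomes effectively instantaneous, so the solution that tracks the stable path from the infinite past is ``left behind'' near $X^-$ at the instant the parameter reaches $\lambda_+$; since $X^- \in \mathbb{B}(Y^+,\lambda_+)$ is interior to an open set (the basin of the asymptotically stable $Y^+$), the frozen-$\lambda_+$ flow then carries the state to $Y^+$ rather than to $X^+$. Let $x^r(\cdot)$ denote the (pullback) solution of \eqref{EQ: rate_tip_setup_2} determined by $\lim_{t\to-\infty}x^r(t)=X^-$.

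First I would fix the geometry. Because $Y^+$ is attracting for \eqref{EQ: rate_tip_setup} at $\lambda=\lambda_+$, its basin $\mathbb{B}(Y^+,\lambda_+)$ is open and contains $X^-$, so I choose $\delta>0$ with $\overline{B}(X^-,2\delta)\subset\mathbb{B}(Y^+,\lambda_+)$. I then pick $\epsilon>0$ and slow times $s_-<s_+$ with $|\Lambda(s)-\lambda_-|<\epsilon$ for $s\le s_-$ and $|\Lambda(s)-\lambda_+|<\epsilon$ for $s\ge s_+$, taking $\epsilon$ small enough that the sink near $X^-$ persists with a forward-invariant Lyapunov neighborhood inside $\overline{B}(X^-,\delta)$ for all $|\lambda-\lambda_-|<\epsilon$, and that $Y^+$ persists with a forward-invariant Lyapunov neighborhood $\mathcal{W}$ for all $|\lambda-\lambda_+|<\epsilon$.

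The core estimate is the claim that, for $r$ large, $x^r(s_+/r)\in\overline{B}(X^-,2\delta)$. Passing to slow time $s=rt$, the equation becomes $dx/ds=r^{-1}f(x,\Lambda(s))$. For $s\le s_-$ the frozen field $f(\cdot,\Lambda(s))$ points strictly inward on the boundary of the Lyapunov neighborhood, and the positive factor $r^{-1}$ does not change this, so that neighborhood is forward invariant under the slowed flow; since $x^r(s)\to X^-$ as $s\to-\infty$, the solution enters it at some $s<s_-$ and is trapped, giving $x^r(s_-)\in\overline{B}(X^-,\delta)$ uniformly in $r$. On the fixed window $[s_-,s_+]$ a boundedness/Gronwall estimate yields $|x^r(s_+)-x^r(s_-)|\le r^{-1}M(s_+-s_-)$, where $M$ bounds $|f|$ on a compact region; hence for $r$ large $x^r(s_+)\in\overline{B}(X^-,2\delta)\subset\mathbb{B}(Y^+,\lambda_+)$, while $|\Lambda(s_+)-\lambda_+|<\epsilon$.

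Step two finishes by an asymptotically autonomous argument on $t\ge s_+/r$. Since $x^r(s_+/r)\in\mathbb{B}(Y^+,\lambda_+)$, the frozen-$\lambda_+$ flow carries it into $\mathcal{W}$ in finite time $\hat T$; for $r$ large the true parameter $\Lambda_r$ stays within $\epsilon$ of $\lambda_+$ on $[s_+/r,\,s_+/r+\hat T]$, so by continuous dependence on initial data and parameters the genuine solution also enters $\mathcal{W}$, which is forward invariant for all parameters within $\epsilon$ of $\lambda_+$. As $t\to\infty$ we have $\Lambda_r(t)\to\lambda_+$, and the solution, confined to $\mathcal{W}$, converges to $Y^+$. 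Thus $\lim_{t\to\infty}x^r(t)=Y^+\neq X^+$ for all sufficiently large $r$, whereas for small $r$ the quasi-static (adiabatic) tracking guaranteed by the stable-path hypothesis gives convergence to $X^+$; together these furnish rates $r_2<r_1$ with distinct fates, which is exactly rate-induced tipping from $X^-$ to $Y^+$. I expect the core estimate to be the main obstacle, since it requires combining infinite-horizon trapping near the moving sink with the $O(1/r)$ transit bound and making both uniform in $r$; the capture by $\mathbb{B}(Y^+,\lambda_+)$ in the second step is comparatively standard.
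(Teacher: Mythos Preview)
The paper does not supply its own proof of this theorem: it is quoted as a result ``condensing the work of \cite{ashwin2017parameter,Kiers.2019}'' and is used as a black box thereafter. So there is nothing in the paper to compare your argument against directly.

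That said, your fast-ramp argument is essentially the standard one from that literature and is correct in outline. Passing to slow time $s=rt$, trapping the pullback solution in a Lyapunov neighborhood of $X^-$ for $s\le s_-$, bounding the drift on the fixed window $[s_-,s_+]$ by $O(r^{-1})$, and then invoking asymptotic autonomy near $\lambda_+$ to capture by $\mathbb{B}(Y^+,\lambda_+)$ is exactly the mechanism behind the forward-threshold-instability criterion. Two small points deserve a sentence more of care. First, the Gronwall step needs a brief bootstrap: you must argue that the solution cannot leave $\overline{B}(X^-,2\delta)$ before time $s_+$ in order to justify using the bound $M=\sup_{\overline{B}(X^-,2\delta)\times[\lambda_-,\lambda_+]}|f|$; this is immediate once $r^{-1}M(s_+-s_-)<\delta$, but should be stated. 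Second, the definition of rate-induced tipping requires exhibiting a rate $r_2<r_1$ for which tipping does \emph{not} occur, and you appeal to ``quasi-static tracking guaranteed by the stable-path hypothesis.'' That adiabatic result (end-point tracking for sufficiently small $r$) is itself a theorem requiring uniform normal hyperbolicity along the stable path; it is proved in the same references, but it is not a free consequence of the stable-path hypothesis as stated here, so you should cite it explicitly rather than treat it as automatic.
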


\noindent Forward threshold stability is not a necessary condition to prevent rate-induced tipping in systems of dimension higher than one. However in \cite{Kiers.2019}, it was proposed that the condition of inflowing stability guarantees that rate-induced tipping cannot happen away from a stable path. It is the approach we take to study rate-induced tipping in the tropical cyclone model.

\begin{definition}
\label{Def:FIS}
Suppose $\Lambda_r$ gives rise to a stable path $(t,X(t))$ and $X^{\pm}=\lim_{t \rightarrow \pm \infty} X(t)$. We say the stable path $(t, X(t))$ is forward inflowing stable if for each $t \in \mathbb{R}$ there is a compact sets $K(t)$ such that

1. \text{For all} $t \in \mathbb{R}$, $X(t) \in$ Int $K(t)$; 

2. If $t_1<t_2, \text{then } K(t_1) \subset K(t_2)$;

3. If $x \in \partial K(t), \text{then there exists } t_0>0 \text{ such that for all } t \in (0,t_0), x(\Lambda_r(t)) \in \text{Int } K(t);$

4. $\displaystyle X_{\pm} \in \text{Int }(K_{\pm}) \text{ where } \displaystyle K_- = \bigcap_{t \in \mathbb{R}} K(t) \text{ and }K_+ = \overline{\bigcup_{s \in \mathbb{R}} K(t)}; $

5. $K_+ \subset B(X^+,\lambda^+)$ is compact.
\end{definition}

\begin{theorem}
Suppose $\Lambda_r$ gives rise to a stable path $(t,X(t))$ and $X^{\pm}=\lim_{t \rightarrow \infty} X(t)$. If the stable path $(X(t),\Lambda_r(t))$ is forward inflowing stable, then there is no R-tipping away from $X^-$ for this $\Lambda_r$.
\end{theorem}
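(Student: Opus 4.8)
The plan is to show that forward inflowing stability traps the orbit emanating from $X^-$ inside the nested family $\{K(t)\}$ for all time, funnelling it into $K_+$, which by hypothesis lies in the basin of $X^+$. Convergence to $X^+$ is exactly \emph{end-point tracking} of the stable path, i.e.\ the negation of R-tipping away from $X^-$, so establishing it suffices.

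First I would fix the solution $x(t)$ of Equation \eqref{EQ: rate_tip_setup_2} that tracks the past state, $x(t)\to X^-$ as $t\to-\infty$; this is the pullback orbit whose fate defines tipping. By condition 4 of Definition \ref{Def:FIS}, $X^-\in\mathrm{Int}(K_-)$ with $K_-=\bigcap_t K(t)$, so $x(t)\in\mathrm{Int}\,K(t)$ for all sufficiently negative $t$. The central claim is then the forward invariance of the moving family: if $x(t_1)\in\mathrm{Int}\,K(t_1)$ for some $t_1$, then $x(t)\in\mathrm{Int}\,K(t)$ for every $t\ge t_1$.

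I would prove this claim by a first-exit contradiction. Assuming it fails, set $t^\ast=\inf\{t>t_1:\ x(t)\notin\mathrm{Int}\,K(t)\}$; by continuity of the flow and of the set boundaries, $x(t^\ast)\in\partial K(t^\ast)$. Condition 3 asserts that from any boundary point the solution immediately re-enters the interior of the time-advanced set, so $x(t^\ast+s)\in\mathrm{Int}\,K(t^\ast+s)$ for all small $s>0$, contradicting the definition of $t^\ast$. The subtlety, and what I expect to be the main obstacle, is that the sets $K(t)$ \emph{grow} with $t$ (condition 2), so a point of $\partial K(t^\ast)$ can fall inside $\mathrm{Int}\,K(t^\ast+s)$ merely through the enlargement of the set, independently of the dynamics. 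One must therefore verify that condition 3 encodes a genuine \emph{inward} component of $f$ relative to the instantaneously moving boundary, rather than a spurious effect of the nesting; the clean way to do this is to track the signed distance of $x(t)$ to $\partial K(t)$ and show it cannot reach zero from the interior, using that on the boundary the inflow of condition 3 dominates the boundary's own motion.

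Granted the claim, $x(t)\in K(t)\subset K_+=\overline{\bigcup_s K(s)}$ for all $t$, so the forward orbit lies in the compact set $K_+$ and $\omega(x)\subset K_+$. Since $\Lambda_r(t)\to\lambda_+$, Equation \eqref{EQ: rate_tip_setup_2} is asymptotically autonomous with limit field $f(\cdot,\lambda_+)$, and by condition 5 the trapping set $K_+$ is a compact subset of the basin $\mathbb{B}(X^+,\lambda_+)$ of the attractor $X^+$. A standard asymptotically-autonomous argument, namely that $\omega(x)$ is internally chain transitive for the limit semiflow and contained in the basin of a point attractor, then forces $x(t)\to X^+$ as $t\to\infty$. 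Thus $x(t)$ end-point tracks the stable path $(t,X(t))$, and no R-tipping away from $X^-$ occurs for this $\Lambda_r$, as claimed.
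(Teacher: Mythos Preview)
The paper does not actually prove this theorem; it is quoted without proof from the cited reference \cite{Kiers.2019} (Kiers and Jones, 2020), so there is no in-paper argument to compare against. Your proposal is the natural and essentially correct strategy for such a result: trap the pullback orbit from $X^-$ in the growing family $\{K(t)\}$ via a first-exit contradiction driven by condition~3, then use compactness of $K_+\subset\mathbb{B}(X^+,\lambda_+)$ together with asymptotic autonomy to force convergence to $X^+$. This is exactly the line of argument one finds in the original reference.

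You have also correctly flagged the one genuine technical point, namely that condition~3 is phrased for the \emph{frozen} flow at parameter $\Lambda_r(t)$ entering $\mathrm{Int}\,K(t)$, whereas the first-exit argument concerns the \emph{nonautonomous} flow relative to the moving family $K(t+s)$. The resolution uses both the nesting $K(t)\subset K(t+s)$ (so $\mathrm{Int}\,K(t)\subset\mathrm{Int}\,K(t+s)$) and the continuity of $\Lambda_r$ (so the nonautonomous flow is $C^0$-close to the frozen flow on short time intervals); together these give that the nonautonomous orbit also lies in $\mathrm{Int}\,K(t+s)$ for small $s>0$, yielding the desired contradiction. Your signed-distance suggestion is a clean way to package this, though the cruder argument just sketched also works.
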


Finally, we comment that to numerically study Equation \eqref{EQ: rate_tip_setup_2}, we convert the system back into an autonomous equation. The standard approach to investigate rate-induced tipping is to augment the system by introducing a new variable, $s=t$. We make the corresponding substitutions and differentiate, resulting in the two-dimensional autonomous system given by
\begin{equation}
\begin{aligned}
\dot{x}&=f(x,\Lambda_r(s)), \\
\dot{s}&=1.
\label{EQ: numerically_rtip}
\end{aligned}
\end{equation}
While this system technically has no fixed points in $x$-$s$ space, fixed points of Equation \eqref{EQ: rate_tip_setup_2}, correspond to the level sets $\dot{x}=0$ of Equation \eqref{EQ: numerically_rtip}, i.e. $f(x,\Lambda_r(s))=0$.

\subsection{Necessary Conditions for Rate-Induced Tipping in the Tropical Cyclone Model}
\label{RITTCM}
Using the above framework, we consider the possibility of rate-induced tipping to aid in the destabilization or the formation of a storm by allowing $c$ and $V_p$ to vary with time, as physically, both wind shear and max potential velocity are components of the model that have the ability to change quickly.

We redefine both  $c$ and $V_p$ as functions of some parameter shift $\Lambda_r: \mathbb{R} \rightarrow \mathbb{R}$ that varies at a rate $r>0$, and satisfies the conditions described in Section \ref{sec:quick intro rtip}. Allowing $V_p$ and $c$ to be dependent on $\Lambda_r(s)$ implies they will ramp between $V_p^-$ to $V_p^+$ and $c^-$ to $c^+$ in time, respectively. Therefore, we adjust the nondimensionalization using $\tau=C_d/(2h)V_p^- t$ and $v=V/V_p^-$, instead of $V_p$, as this parameter is now time dependent. We choose the minimum max storm potential, $V_p^-$, as the fixed value of $V_p(\tau)$. This change results in Equation \eqref{Eqn:DimensionAut} becoming

\begin{equation}
\begin{aligned}
    \frac{dv}{d \tau} &= \frac{(1-\gamma) V_p(\tau)^2}{{V_p^-}^2} m^3-(1-\gamma m^3)v^2, \\
    \frac{dm}{d \tau} &= (1-m)v-c(\tau)m.
\end{aligned}\label{Eqn:DimensionlessNonAut}
\end{equation}

\begin{proposition}
There is no rate-induced tipping away from $\mathcal{O}$ regardless of the $\Lambda_r$ chosen.
\label{Prop: No r-tip}
\end{proposition}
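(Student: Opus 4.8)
The plan is to exploit the key structural fact that the non-storm state $\mathcal{O}=(0,0)$ is a fixed point of the nonautonomous system \eqref{Eqn:DimensionlessNonAut} for \emph{every} value of the time-dependent parameters, so that it is completely insensitive to the parameter shift $\Lambda_r$. First I would substitute $(v,m)=(0,0)$ directly into the right-hand side of \eqref{Eqn:DimensionlessNonAut}: the $v$-equation gives $\frac{(1-\gamma)V_p(\tau)^2}{{V_p^-}^2}\cdot 0-(1-0)\cdot 0=0$ and the $m$-equation gives $(1-0)\cdot 0-c(\tau)\cdot 0=0$, so the vector field vanishes at $\mathcal{O}$ independently of $V_p(\tau)$ and $c(\tau)$. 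Consequently the stable path $(t,X(t))$ that $\mathcal{O}$ determines in the frozen-time system is the \emph{constant} path $X(t)\equiv\mathcal{O}$, with $X^-=X^+=\mathcal{O}$.

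Next I would translate this into the language of the rate-tipping definitions. Since the right-hand side of \eqref{Eqn:DimensionlessNonAut} vanishes at $\mathcal{O}$ for every $\tau$, the constant function $x(\tau)\equiv\mathcal{O}$ is itself a solution of the nonautonomous system; by uniqueness of solutions, guaranteed by the smoothness assumptions on $f$ and $\Lambda_r$, it is the only solution through $\mathcal{O}$. Hence any trajectory initialized exactly at the past limit state $e^-=\mathcal{O}$ remains at $\mathcal{O}$ for all time, i.e. $x^{r}(\mathcal{O})(\tau)\equiv\mathcal{O}$ for every rate $r>0$, and therefore $\lim_{\tau\to\infty}x^{r}(\mathcal{O})=\mathcal{O}=X^+$ for \emph{all} $r$. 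In other words, the solution end-point tracks the stable path at every rate.

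Finally, I would apply the definition of rate-induced tipping directly. Tipping away from $e^-$ requires rates $0<r_2<r_1$ for which $x^{r_1}(e^-)$ converges to a future invariant set $\eta^+\neq X^+$ while $x^{r_2}(e^-)$ does not. Since the trajectory launched from $\mathcal{O}$ never leaves $\mathcal{O}=X^+$, no such pair of rates exists and no critical rate $r_c$ can be defined, which is exactly the assertion of the proposition.

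The argument is essentially immediate once the parameter-independence of $\mathcal{O}$ is recognized, so I do not anticipate a genuine analytic obstacle; the work here is conceptual rather than computational. The one point needing care is the mismatch with the tipping definition, which nominally requires $e^-$ to be a \emph{hyperbolic} sink, whereas $\mathcal{O}$ is known from the center-manifold analysis of Section \ref{DetModel} to be non-hyperbolic. I would note that hyperbolicity plays no role in the argument, since the conclusion follows solely from the invariance of $\mathcal{O}$ under the flow, and thus the result holds for the stable---if degenerate---non-storm state all the same.
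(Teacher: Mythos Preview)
Your proposal is correct and follows essentially the same approach as the paper: the paper's proof is a one-line observation that $\mathcal{O}$ is a stable fixed point with $\dot v=\dot m=0$ for all parameter values, hence no rate-induced tipping can occur for any $\Lambda_r$. Your version simply unpacks this argument more carefully---verifying the vanishing of the vector field, invoking uniqueness, and matching against the tipping definition---and even addresses the non-hyperbolicity caveat that the paper leaves implicit.
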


\begin{proof}
For all values of $c$ and $V_p$, $\mathcal{O}$ is a stable fixed point and $\dot{v}=\dot{m}=0$. Therefore, there can be no rate-induced tipping away from $\mathcal{O}$ for any ramp $\Lambda_r$.
\end{proof}

We will generally assume the values of $c$ and $V_p$ are such that there are three fixed points of Equation \eqref{Eqn:DimensionlessNonAut} for all values of $\Lambda_r(\tau)$, as this is the most interesting case to study due to Proposition \ref{Prop: No r-tip}. However, we describe the other cases and their outcomes below.

\

\noindent \textit{Case 1.} Assume that the values of $c$ and $V_p$ are chosen so there is only one fixed point of Equation \eqref{Eqn:DimensionlessNonAut} when $\Lambda_r(\tau)=\lambda^-$, and one or three fixed points of Equation \eqref{Eqn:DimensionlessNonAut} when $\Lambda_r(\tau)=\lambda^+$. If there is only one fixed point when $\Lambda_r(\tau)=\lambda^-$, it has to be the origin, $\mathcal{O}$, as shown in the deterministic analysis conducted in Section \ref{DetModel}. By Proposition \ref{Prop: No r-tip}, there can be no rate-induced tipping away from $\mathcal{O}$.

\

\noindent \textit{Case 2.} Assume that the values of $c$ and $V_p$ are chosen so there are three fixed points of Equation \eqref{Eqn:DimensionlessNonAut} when $\Lambda_r(\tau)=\lambda^-$, $\mathcal{O},\mathcal{U}^-,\mathcal{S}^-$, and one fixed point of Equation \eqref{Eqn:DimensionlessNonAut} when $\Lambda_r(\tau)=\lambda^+$. The fixed point when $\Lambda_r(\tau)=\lambda^+$ must be the origin by the deterministic analysis conducted in Section \ref{DetModel}. By Proposition \ref{Prop: No r-tip}, there will be no rate-induced tipping away from $\mathcal{O}$. If we look at tipping away from $\mathcal{S}^-$, we have to tip to $\mathcal{O}$, independent of $r$, undergoing bifurcation tipping, as we have the annihilation of two fixed points.

\

\noindent The case we will exemplify is when we assume that the values of $c$ and $V_p$ are chosen such that there are three fixed points of Equation \eqref{Eqn:DimensionlessNonAut} when $\Lambda_r(\tau)=\lambda^-: \mathcal{O}, \mathcal{U}^-,\mathcal{S}^-$ and three fixed points of Equation \eqref{Eqn:DimensionlessNonAut} when $\Lambda_r(\tau)=\lambda^+:\mathcal{O}, \mathcal{U}^+,\mathcal{S}^+$ . We will make use of Proposition \ref{Prop_FI} within the proof of Theorem \ref{Thm:Rtip}. See Appendix C for the proof of this proposition.

\begin{proposition}
Suppose $V_p,c>0$ such that $p(v,V_p,c)$ has two positive zeros, $0<v_1<v_2$. If $a=(a_1,a_2)$ and $b=(b_1,b_2)$ satisfy 
$$
\begin{cases} 
v_1<a_1<v_2 \\
\sqrt[3]{\frac{a_1^2}{{(1-\gamma)}(\frac{V_p}{V_p^-})^2+\gamma a_1^2}}<a_2<\frac{a_1}{a_1+c}
\end{cases}
and
\hspace{5mm}
\begin{cases} 
v_2<b_1 \\
\frac{b_1}{b_1+c}<b_2<\sqrt[3]{\frac{b_1^2}{{(1-\gamma)}(\frac{V_p}{V_p^-})^2+\gamma b_1^2}}
\end{cases}$$
then the box $K_{a,b}=[a_1,b_1]\times[a_2,b_2]$ is forward invariant with respect to the flow.
\end{proposition}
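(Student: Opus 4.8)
The plan is to verify forward invariance directly from the Nagumo (sub-tangentiality) criterion: it suffices to show that on each of the four edges of the rectangle $K_{a,b}$ the vector field points into $K_{a,b}$ or is tangent to $\partial K_{a,b}$. Writing $\beta=(V_p/V_p^-)^2$, the autonomous field for fixed $V_p,c$ is $\dot{v}=(1-\gamma)\beta m^3-(1-\gamma m^3)v^2$ and $\dot{m}=(1-m)v-cm$. The first thing I would do is observe that the four scalar inequalities defining $a$ and $b$ are nothing but sign conditions on this field at the two opposite corners. Cubing and clearing the positive denominator in the cube-root inequality for $a_2$ shows that $a_2>\sqrt[3]{a_1^2/((1-\gamma)\beta+\gamma a_1^2)}$ is equivalent to $\dot{v}(a_1,a_2)>0$, while $a_2<a_1/(a_1+c)$ is equivalent to $\dot{m}(a_1,a_2)>0$; in the same way the two inequalities for $b$ are equivalent to $\dot{v}(b_1,b_2)<0$ and $\dot{m}(b_1,b_2)<0$. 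Thus at the bottom-left corner the field points up and to the right, and at the top-right corner it points down and to the left.

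Next I would propagate these two corner signs to the full edges using monotonicity of the field in one variable. Differentiating, $\partial_m\dot{v}=3m^2[(1-\gamma)\beta+\gamma v^2]$, which is strictly positive on $K_{a,b}$ because $m\ge a_2>0$; hence for fixed $v$, $\dot{v}$ is strictly increasing in $m$. On the left edge $v=a_1$ the minimum of $\dot{v}$ over $m\in[a_2,b_2]$ is attained at $m=a_2$, where it is positive, so $\dot{v}>0$ along the whole left edge; on the right edge $v=b_1$ the maximum is attained at $m=b_2$, where it is negative, so $\dot{v}<0$ there. For the horizontal edges, $\dot{m}$ is affine in $v$ with slope $1-m$. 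On the bottom edge $m=a_2$, and since $a_2<a_1/(a_1+c)<1$ the slope is positive, so $\dot{m}$ is minimized at $v=a_1$, where it is positive; hence $\dot{m}>0$ along the bottom.

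The one place that requires care, and which I would flag as the main (if mild) obstacle, is the top edge $m=b_2$, where the slope $1-b_2$ of the affine map $v\mapsto\dot{m}$ need not be positive: the upper bound $\sqrt[3]{b_1^2/((1-\gamma)\beta+\gamma b_1^2)}$ can exceed $1$ (precisely when $b_1>\sqrt{\beta}$), so $b_2$ may be $\ge 1$ and the naive ``monotone, so check the corner'' reasoning fails. I would dispatch this by cases: if $b_2\ge 1$ then $(1-b_2)v-cb_2\le -cb_2<0$ for every $v\ge 0$; if $b_2<1$ the slope is positive and $\dot{m}$ is maximized at $v=b_1$, where it is negative by hypothesis. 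Either way $\dot{m}<0$ on the top edge. With the field pointing strictly inward on all four open edges, and hence into the (quadrant) tangent cones at the four corners as well, Nagumo's theorem yields forward invariance of $K_{a,b}$. The same care governs the other monotonicity step, where $m>0$ throughout the box is exactly what makes $\partial_m\dot{v}>0$.

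Finally, for completeness I would record why the hypotheses are consistent, i.e. why the admissible intervals for $a_2$ and $b_2$ are nonempty, and this is where the roots $v_1,v_2$ enter. Since $p(0,V_p,c)<0$ and $p(v,V_p,c)\to-\infty$, the two positive zeros give $p>0$ on $(v_1,v_2)$ and $p<0$ on $(v_2,\infty)$; and a one-line computation shows $p(v,V_p,c)<0$ is equivalent to the $v$-nullcline lying above the $m$-nullcline at that $v$. Hence $v_1<a_1<v_2$ forces $\sqrt[3]{a_1^2/((1-\gamma)\beta+\gamma a_1^2)}<a_1/(a_1+c)$ and $v_2<b_1$ forces $b_1/(b_1+c)<\sqrt[3]{b_1^2/((1-\gamma)\beta+\gamma b_1^2)}$, so both intervals are nonempty. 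The remainder of the argument is then just the bookkeeping of translating the stated inequalities into inward-pointing boundary conditions.
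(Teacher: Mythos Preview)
Your proof is correct and follows essentially the same approach as the paper's: verify on each of the four edges of $K_{a,b}$ that the vector field points inward, reducing via monotonicity of $\dot v$ in $m$ and of $\dot m$ in $v$ to evaluations at the corners $(a_1,a_2)$ and $(b_1,b_2)$. Your treatment is in fact slightly more careful than the paper's, which on the top edge writes $(1-b_2)v\le(1-b_2)b_1$ without remarking that this needs $b_2\le 1$; your case split there and your nonemptiness check for the $a_2$ and $b_2$ intervals are welcome additions.
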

\label{Prop_FI}

\begin{theorem}
Assume that $\Lambda_r(\tau),V_p(\Lambda_r(\tau))$, and $c(\Lambda_r(\tau))$ are chosen such that there are three fixed points at $\Lambda_r(\tau)=\lambda^-:\mathcal{O},\mathcal{U}^-,\mathcal{S}^-$ and three fixed points at $\Lambda_r(\tau)=\lambda^+:\mathcal{O},\mathcal{U}^+,\mathcal{S}^+$. Assume that there exist paths $(\tau,p_u(\tau))$ and $(\tau,p_s(\tau))$ and are distinct for all values of $\tau$. If either $V_p(\Lambda_r(\tau))$ or $c(\Lambda_r(\tau))$ is nonincreasing as a function of $\tau$, there can be no rate-induced tipping away from the stable storm state $\mathcal{S}^-$ to the non-storm state $\mathcal{O}$.
\label{Thm:Rtip}
\end{theorem}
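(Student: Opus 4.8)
The plan is to show that the stable path $(\tau,p_s(\tau))$ admits a \emph{forward inflowing stable} family of compact sets in the sense of Definition \ref{Def:FIS}, so that the theorem stated just above (forward inflowing stability rules out R-tipping away from $X^-=\mathcal{S}^-$) applies directly. The natural building blocks are the forward-invariant boxes $K_{a,b}=[a_1,b_1]\times[a_2,b_2]$ produced by Proposition \ref{Prop_FI}, whose corners are trapped between the two nullclines $m=v/(v+c)$ and $m=\sqrt[3]{v^2/((1-\gamma)(V_p/V_p^-)^2+\gamma v^2)}$. Each such box straddles the stable storm state $\mathcal{S}$ while excluding the saddle $\mathcal{U}$ (since $a_1>v_1$) and the origin $\mathcal{O}$. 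The hypothesis that the paths $p_u$ and $p_s$ are distinct for every $\tau$ guarantees that no saddle--node collision occurs along the ramp, so three fixed points persist and a separating box exists at each frozen time.

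Concretely, I would construct time-dependent corners $a(\tau)=(a_1(\tau),a_2(\tau))$ and $b(\tau)=(b_1(\tau),b_2(\tau))$ and set $K(\tau)=K_{a(\tau),b(\tau)}$, imposing two requirements at once. First, for each $\tau$ the corners should satisfy the inequalities of Proposition \ref{Prop_FI} at the frozen parameters $\big(V_p(\Lambda_r(\tau)),c(\Lambda_r(\tau))\big)$; this makes every frozen box forward invariant and supplies condition~(3) of Definition \ref{Def:FIS}, namely that the vector field points strictly inward on $\partial K(\tau)$. Second, the maps $\tau\mapsto a_1(\tau),a_2(\tau)$ should be nonincreasing and $\tau\mapsto b_1(\tau),b_2(\tau)$ nondecreasing, which yields the nesting $K(\tau_1)\subseteq K(\tau_2)$ of condition~(2). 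Conditions~(1),~(4), and~(5) would then follow from the strict nullcline inequalities: the stable path lies in the interior of each box, the limit boxes $K_-=\bigcap_\tau K(\tau)$ and $K_+=\overline{\bigcup_\tau K(\tau)}$ contain $\mathcal{S}^-$ and $\mathcal{S}^+$ in their interiors, and $K_+$---being forward invariant at $\lambda^+$ with $\mathcal{S}^+$ as its only interior attractor---lies in $\mathbb{B}(\mathcal{S}^+,\lambda^+)$ by a Poincar\'e--Bendixson argument.

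The crux is reconciling these two requirements, and this is exactly where the monotonicity hypothesis is used. As $\tau$ increases the two nullclines deform: decreasing $c$ raises the $m$-nullcline $v/(v+c)$ monotonically, while decreasing $V_p$ raises the $v$-nullcline monotonically. If either $c(\Lambda_r(\tau))$ or $V_p(\Lambda_r(\tau))$ is nonincreasing, one of the two nullclines moves in a single fixed direction, and I expect this to let me slide the corresponding box edges monotonically outward while keeping each corner pinned in the correct inter-nullcline band, thereby producing a genuinely nested family. The main obstacle is to verify that this monotone deformation does not force a \emph{non}-monotone adjustment of the edges controlled by the other, possibly still-increasing, parameter---in particular that the gap $a_1(\tau)>v_1(\tau)$ separating the box from the saddle can be maintained as $\tau\to\infty$. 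Settling this requires a careful case analysis of how the region between the two nullclines translates under the parameter change, and it is precisely the breakdown of this reconciliation when \emph{both} parameters increase that opens the door to the tipping established elsewhere.
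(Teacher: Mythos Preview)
Your strategy---build a nested family of the forward-invariant boxes of Proposition~\ref{Prop_FI} around the stable path and invoke forward inflowing stability---is exactly the paper's approach, and your identification of the crux (making the lower-left corner $a(\tau)$ monotone while maintaining $a_1(\tau)>v_1(\tau)$) is accurate. The paper even takes $b_1,b_2$ to be constants rather than nondecreasing functions, which is a minor simplification of what you propose.

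What you are missing is the concrete device that resolves the obstacle you flag. The paper does \emph{not} try to track the saddle abscissa $v_1(\tau)$ directly. Instead it observes that the cubic $p(v,V_p,c)$ is homogeneous under the joint scaling $(v,V_p,c)\mapsto(\epsilon v,\epsilon V_p,\epsilon c)$, so the saddle--node locus satisfies $v^*(\epsilon V_p)=\epsilon\,v^*(V_p)$, $c^*(\epsilon V_p)=\epsilon\,c^*(V_p)$, and hence $m^*=v^*/(v^*+c^*)$ is a \emph{constant} independent of $V_p$ (and symmetrically, independent of $c$). One then imposes the stronger but cleaner bounds
\[
v^*\big(V_p(\Lambda_r(\tau))\big)<a_1(\tau)<v_2(\tau),\qquad m^*<a_2(\tau)<\frac{a_1(\tau)}{a_1(\tau)+c(\Lambda_r(\tau))},
\]
whose left-hand sides are nonincreasing in $\tau$ precisely when $V_p$ is (the case of nonincreasing $c$ is symmetric). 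This is what permits $a_1,a_2$ to be chosen continuous and nonincreasing simultaneously, and it sidesteps entirely the ``non-monotone adjustment'' worry you raise. Without this homogeneity observation your sketch remains a plan rather than a proof; with it, the argument closes immediately.
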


\begin{proof}
We will prove that if either $V_p(\Lambda_r(\tau))$ or $c(\Lambda_r(\tau))$ is nonincreasing as a function of $\tau$, then $(\tau,p_s(\tau))$ is forward inflowing stable and hence there can be no rate-induced tipping away from $\mathcal{S}^-$.

First suppose $V_p(\Lambda_r(\tau))$ is nonincreasing. Write $p_s(\tau)=(v_2(\tau),m_2(\tau))$. For each value of $V_p$ there is a unique value of $c$, call it $c^*(V_p)$ for which there is exactly one positive zero of polynomial $p$, in Equation \eqref{poly}. Since $p(v,V_p,c^*(V_p))=0$ if and only if $p(\epsilon v,\epsilon V_p,\epsilon c^*(V_p))=0$ for any $\epsilon>0$, it follows that $c^*(\epsilon V_p)=\epsilon c^*(V_p)$. If we let $v^*(V_p)$ denote the unique zero of $p(v,V_p,c^*(V_p))$ then also $v^*(\epsilon V_p)=\epsilon v^*(V_p)$. In particular, $v^*$ is strictly increasing as a function of $V_p$. If we let $m^*(V_p)=\frac{v^*(V_p)}{v^*(V_p)+c^*(V_p)}$ then $m^*(\epsilon V_p)=m^*(V_p)$ for all $\epsilon >0$, and we can call this common value $m^*$. 

Now we would like to find functions $a_1,a_2:\mathbb{R}\rightarrow \mathbb{R}$ that satisfy
\begin{equation}
\begin{aligned}
\begin{cases} 
v^*(V_p(\Lambda_r(\tau)))&<a_1(\tau)<\hspace{3mm} v_2(\tau) \\
\hspace{10mm} m^*&<a_2(\tau)<\hspace{3mm} \frac{a_1(\tau)}{a_1(\tau)+c(\Lambda_r(\tau))}
\end{cases}
\end{aligned}
\end{equation}
for all $\tau \in \mathbb{R}.$ It is possible to find these functions $a_1,a_2$ by the following argument. For any value of $s$, we may assume $c(\Lambda_r)\in(0,c^*(V_p(\Lambda_r)))$ since we are assuming the paths $(\tau,p_u(\tau))$ and $(\tau,p_s(\tau))$ exist and are distinct. This implies $v*(V_p(\Lambda_r(\tau)))<v_2(\tau)$ and so we can choose $a_1(\tau)$ to satisfy the first inequality. Given this,
\begin{equation}
m^*=\frac{v^*(V_p(\Lambda_r(\tau)))}{v^*(V_p(\Lambda_r(\tau)))+c^*(V_p(\Lambda_r(\tau)))}<\frac{v^*(V_p(\Lambda_r(\tau)))}{v^*(V_p(\Lambda_r(\tau)))+c(\Lambda_r(\tau))}<\frac{a_1(\tau)}{a_1(\tau)+c(\Lambda_r(\tau))},
\end{equation}
and so we can choose $a_2(\tau)$ to satisfy the second inequality. Furthermore, we would like to enforce $a_1,a_2$ be continuous and nonincreasing, which is possible since $v^*(V_p(\Lambda_r(\tau)))$ and $m^*$ are both nonincreasing. 

Also pick constants $b_1,b_2$ such that 
\begin{equation}
\begin{aligned}
\begin{cases} 
\hspace{4mm} v_2(\tau)&<b_1 \\
\frac{b_1}{b_1+c(\Lambda_r(\tau))}&<b_2< \hspace{2mm}\sqrt[3]{\frac{b_1^2}{{(1-\gamma)}(\frac{V_p(\Lambda_r(\tau))}{V_p^-})^2+\gamma b_1^2}}
\end{cases}
\label{EQ:constantb}
\end{aligned}
\end{equation}
for all $\tau$. For each $\tau$ define $K(\tau)=[a_1(\tau),b_1]\times[a_2(\tau),b_2]$. By Proposition \ref{Prop_FI}, each $K(\tau)$ is forward invariant with respect to the flow when $V_p=V_p(\Lambda_r(\tau))$ and $c=c(\Lambda_r(\tau))$. By how we defined $\{K(\tau)\}$, they satisfy Definition \ref{Def:FIS} to show $(\tau,p_s(\tau))$ is forward inflowing stable path and so there can be no rate-induced tipping away from $\mathcal{S}^-$.

Next suppose $c(\Lambda_r(\tau))$ is nonincreasing. For each value of $c$ there is a unique value of $V_p$, call it $V_p^*(c)$ for which there is exactly one positive zero of the polynomial $p$, in Equation \eqref{poly}. Since $p(v,V_p^*(c),c)=0$ if and only if $p(\epsilon v,\epsilon V_p^*(c),\epsilon c)=0$ for any $\epsilon>0$, it follows that $V_p^*(\epsilon c)=\epsilon V_p^*(c).$ If we let $v*(c)$ denote the unique zero of $p(v, V_p*(c),c)$, then also $v^*(\epsilon c)=\epsilon v^*(c)$. In particular, $v^*$ is strictly increasing as a function of $c$. If we let $m^*(c)=\frac{v^*(c)}{v^*(c)+c)}$, then $m^*(\epsilon c)=m^*(c)$ for all $\epsilon>0$, and we can call this common value $m^*$.

Now we would like to find continuous nonincreasing functions $a_1,a_2:\mathbb{R}\rightarrow \mathbb{R}$ that satisfy 
\begin{equation}
\begin{aligned}
\begin{cases} 
v^*(c(\Lambda_r(\tau)))&<a_1(\tau)<\hspace{3mm} v_2(\tau) \\
\hspace{10mm} m^*&<a_2(\tau)<\hspace{3mm} \frac{a_1(\tau)}{a_1(\tau)+c(\Lambda_r(\tau))}
\end{cases}
\end{aligned}
\end{equation}
for all $\tau \in \mathbb{R}.$ The reasons for why this is possible are the same as above. Also, pick constants $b_1,b_2$ to satisfy Equation \eqref{EQ:constantb} for all $\tau$. Then define $K(\tau)=[a_1(\tau),b_1]\times[a_2(\tau),b_2]$. Once again, these sets can be used to show that $(\tau,p_s(\tau))$ is a forward inflowing stable path. Therefore there can be no R-tipping away from $\mathcal{S}^-$.
\end{proof}

Interestingly, if both $V_p(\Lambda_r(\tau))$, and $c(\Lambda_r(\tau))$ are increasing, then Theorem 3.8 allows for the possibility of rate-induced tipping from $\mathcal{S}^-$ to $\mathcal{O}$ for $r>r_c$, as the next example will demonstrate. Using the results above, a mathematically general form of our ramped parameters $V_p$ and $c$ that will result in rate-induced tipping for $r>r_c$ is given by
\begin{equation}
\begin{aligned}
 V_p(\tau)&=V_p^-(1-\Lambda_r(\tau))+V_p^+ \Lambda_r(\tau), \\
c(\tau)&=kV_p(\tau), 
\end{aligned}
\end{equation}
where $k$ is a correlation coefficient between $V_p$ and $c$, and the functions $\Lambda_r, V_p$, and $c$ are chosen such that three fixed points, two stable and one saddle, exist for all time.

\subsection{Example of Rate-Induced Tipping in the Tropical Cyclone Model}
\label{sec: example rate}

As the problem we investigate in this analysis is in $\mathbb{R}^2$, we must understand the basin of attraction in two-dimensional space. For both $\Lambda_r(\tau)=\lambda^-$ and $\Lambda_r(\tau)=\lambda^+$, we have two asymptotically stable fixed points that are separated by a saddle node, whose stable manifold forms a separatrix for the basins of attraction of $\mathcal{O}$ and $\mathcal{S}^-$ and $\mathcal{O}$ and $\mathcal{S}^+$ respectively. If $\mathcal{S}^- \in \mathbb{B}(\mathcal{O},\lambda^+)$, we will have rate-induced tipping away from $\mathcal{S}^-$ by Theorem \ref{THM: rate induced conditions}

Using what we learned in Section \ref{RITTCM}, we choose a parameter shift $\Lambda_r(\tau)$, and increasing functions $V_p$ and $c$ that are dependent on $\Lambda_r(\tau)$ such that $\mathcal{S}^- \in \mathbb{B}(\mathcal{O},\lambda^+)$. One such choice is
\begin{align}
    \displaystyle \Lambda_r(\tau)&=\frac{1}{2}(1+\tanh(r \tau)), \label{ramp_ex}\\
    \displaystyle  V_p(\tau)&=90\Lambda_r(\tau)+10, \label{VP_ex}\\
    \displaystyle c(\tau)&=0.13V_p(\tau), \label{S_ex} 
\end{align}
where the ratio between $V_p(\tau)$ and $c(\tau)$ is fixed at $0.13$ to guarantee that there are always three equilibria. For this choice in functions, $\mathcal{S}^- \in \mathbb{B}(\mathcal{O},\lambda^+)$, as seen in Figure \ref{FIG: basins and rtip}(a). This indicates by Theorem \ref{THM: rate induced conditions} that there will rate-induced tipping away from $\mathcal{S}^-$ to $\mathcal{O}$ for sufficiently large $r > 0$.

Using our approach from Section \ref{sec:quick intro rtip} to convert Equation \eqref{Eqn:DimensionlessNonAut} back to an autonomous system, we have the system of first order equations given by
\begin{equation}
\begin{aligned}
    \frac{dv}{d \tau} &= \frac{(1-\gamma) V_p(s)^2}{{V_p^-}^2} m^3-(1-\gamma m^3)v^2, \\
    \frac{dm}{d \tau} &= (1-m)v-c(s)m, \\ \frac{ds}{d\tau}&=1 .
\end{aligned}
\label{Eqn:DimensionlessAut_rtip}
\end{equation}

\noindent Solving this system, we determine for $r<r_c$ we endpoint track the stable path from $\mathcal{S}^-$ to $\mathcal{S}^+$. However, when $r>r_c$ we tip from $\mathcal{S}^-$ to $\mathcal{O}$. For $r=r_c$ we have a heteroclinic connection between $\mathcal{S}^-$ and $\mathcal{U}^+$. Via numerical simulations we find that $r_c \in (0.0506279,0.050628)$. We show numerical results of tipping in $(v,m)$ space in Figure \ref{FIG: basins and rtip}(b).

In conclusion, we see that due to the stability of the non-storm state, $\mathcal{O}$, we cannot form a tropical cyclone, but a storm can destabilize with rapidly increasing max potential velocity and wind shear.

\begin{figure}[t!]
\centering
\begin{subfigure}[b]{0.45\textwidth}
    \centering
    \includegraphics[width=.95\textwidth]{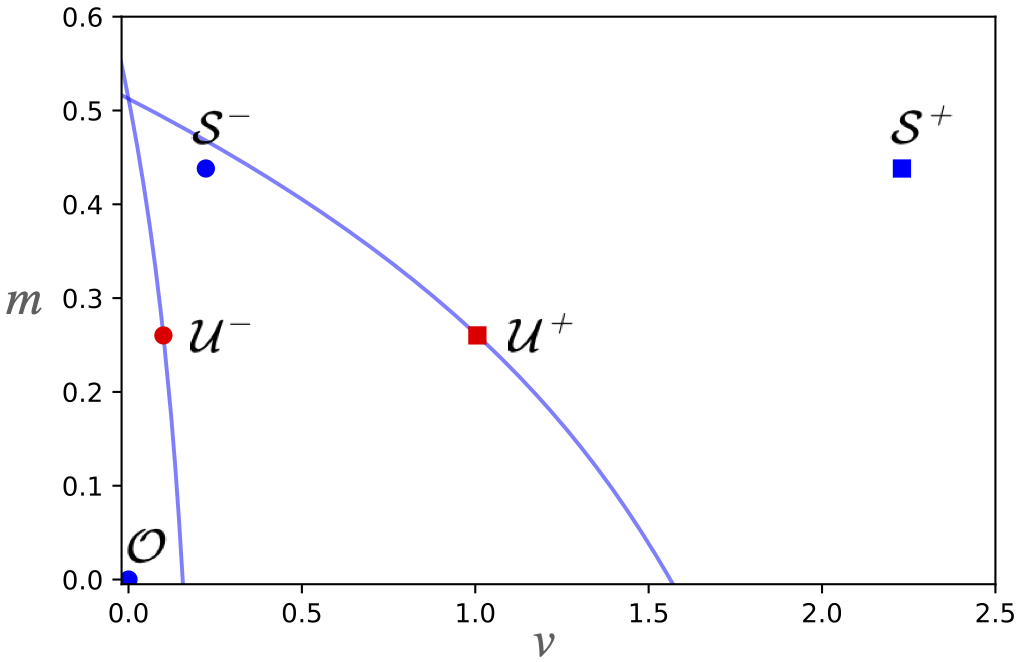}
\end{subfigure}
\begin{subfigure}[b]{0.45\textwidth}
    \centering
    \includegraphics[width=.95\textwidth]{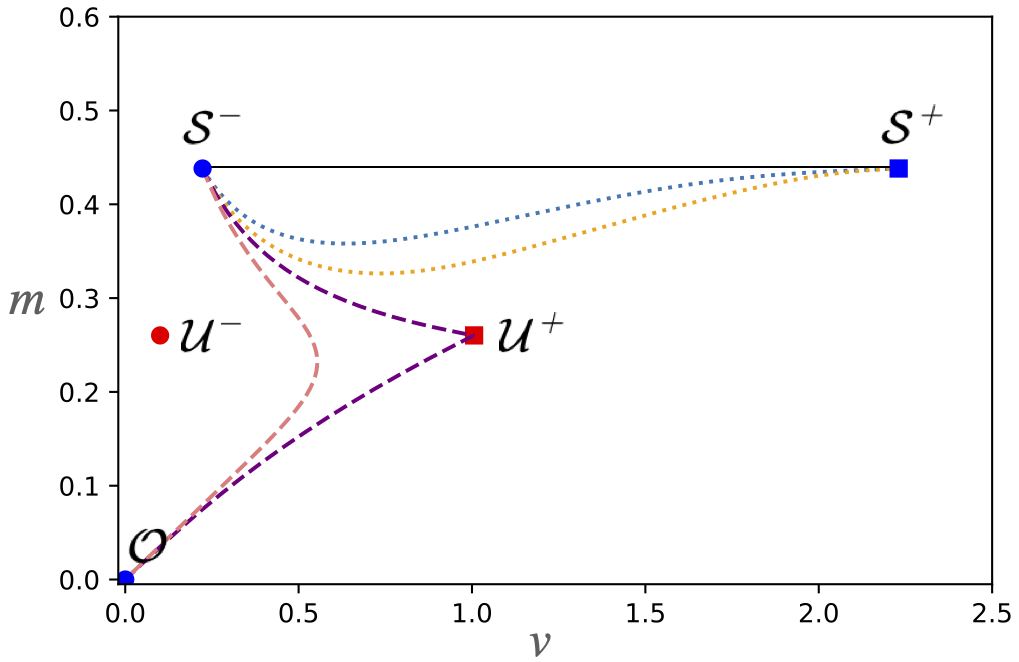}
\end{subfigure}
    \caption{In both plots the fixed points of the system given by Equation \eqref{Eqn:DimensionlessAut} at the start and end of the ramp function, Equation \eqref{ramp_ex}, are shown. $V_p$ and $c$, defined by Equations \eqref{VP_ex} and \eqref{S_ex}, are time dependent and $\gamma=0.43$. The three fixed points at the start of the ramp, $\mathcal{O}, \mathcal{U}^-, \mathcal{S}^-$, correspond to the non-storm state, the unstable storm state, and the stable storm state. These stable fixed points are denoted by blue circles and the saddle node is denoted by a red circle. The three fixed points at the end of the ramp, $\mathcal{O}, \mathcal{U}^+, \mathcal{S}^+$, correspond to the non-storm state, the unstable storm state, and the stable storm state. These stable fixed points are denoted by blue squares and the saddle is denoted by a red square. (a) Blue curves correspond to the stable manifolds of $\mathcal{U}^-$ and $\mathcal{U}^+$. (b) Plot of the solutions to the system given by Equation \eqref{Eqn:DimensionlessAut} for different values or $r$. The black curve is the solution when $r=0$ and end-point tracks the stable path from $\mathcal{S}^-$ to $\mathcal{S}^+$. The blue (orange) dotted curve corresponds to the solution for $r=0.03$ ($r=0.04$), which end-point tracks the stable path from $\mathcal{S}^-$ to $\mathcal{S}^+$. The purple (pink)  dashed curve corresponds to the solution for $r=0.050628$ ($r=0.08$), which do not endpoint track the stable path, and tip from $\mathcal{S}^-$ to $\mathcal{O}$.}
    \label{FIG: basins and rtip}
\end{figure}

\section{Noise-Induced Tipping in the Tropical Cyclone Model} \label{Noise}
In this section we study noise-induced transitions between the stable states $\mathcal{O}$ and $\mathcal{S}$ for the stochastic differential equation 
\begin{equation}
\begin{aligned}
    dv &=f(v,m) d\tau+ \sigma_1 dW_1, \\
    dm &=g(v,m) d\tau + \sigma_2 dW_2, \label{Eq:Noise:SDE}
    \end{aligned}
\end{equation}
where $\sigma_1,\sigma_2>0$, $W_1, W_2$ are independent Brownian motions, $f,g$ are defined as in Equation \eqref{Eqn:DimensionlessAut} and in this section we are returning to the dimensionless coordinates introduced in Section \ref{DetModel}. In Section \ref{DetModel} we showed that for small wind shear the separation between $\mathcal{O}$ and $\mathcal{U}$ is relatively small in comparison with the separation between $\mathcal{U}$ and $\mathcal{S}$. Consequently, we expect $\mathcal{O}$ is highly susceptible to noise-induced tipping while $\mathcal{S}$ is more robust to random fluctuations. Moreover, the existence of a one-dimensional center manifold near $\mathcal{O}$ indicates that the deterministic flow is comparatively weak when restricted to this manifold, providing a natural region in phase space that is susceptible to noise-induced transitions. 

Recall from Section \ref{DetModel} that for physical reasons $v,m\geq 0$  and additionally it can be shown that the autonomous system extended to $\mathbb{R}^2$ is unstable for $v<0$ but the first quadrant is invariant. However, since realizations of Equation \eqref{Eq:Noise:SDE} can enter these nonphysical regions of phase space, we interpret Equation \eqref{Eq:Noise:SDE} to have reflecting boundary conditions along the lines $v=0$ and $m=0$. That is, for $(\hat{v},\hat{m})\in \mathbb{R}^2$ we consider the system
\begin{equation}
\begin{aligned}
    d\hat{v} &=\hat{f}(\hat{v},\hat{m}) d \tau+ \sigma_1 dW_1, \\
    d\hat{m} &=\hat{g}(\hat{v},\hat{m}) d \tau + \sigma_2 dW_2, \label{Eq:Noise:Reflecting}
\end{aligned}
\end{equation}
where the reflected components of the vector field are defined by
\begin{equation}
\begin{aligned}
\hat{f}(\hat{v},\hat{m})&=\begin{cases} 
      f(\hat{v},\hat{m}) & \hat{v}>0, \hat{m}>0 \\
      -f(-\hat{v},\hat{m}) & \hat{v}<0, \hat{m}>0 \\
      -f(-\hat{v},-\hat{m}) & \hat{v}<0, \hat{m}<0 \\
      f(\hat{v},-\hat{m}) & \hat{v}>0, \hat{m}<0
\end{cases}, \\
\hat{g}(\hat{v},\hat{m})&=\begin{cases}=g(\hat{v},\hat{m}) & \hat{v}>0, \hat{m}>0 \\
      -g(-\hat{v},\hat{m}) & \hat{v}<0, \hat{m}>0 \\
      -g(-\hat{v},-\hat{m}) & \hat{v}<0, \hat{m}<0 \\
      g(\hat{v},-\hat{m}) & v>0, m<0
\end{cases}.
\end{aligned} \label{Eq:Noise:ReflectedComponents}
\end{equation}
Realizations to Equation \eqref{Eq:Noise:Reflecting} are then mapped to realizations of Equation \eqref{Eq:Noise:SDE} with reflecting boundary conditions by setting $(v(\tau),m(\tau))=(|\hat{v}(\tau)|,|\hat{m}(\tau)|)$; see Figure \ref{Fig:Noise:reflecting}(a-b). Throughout the rest of this document we will suppress this notation with the understanding that when referring to $f,g$ we are in fact using the reflected components $\hat{f},\hat{g}$ and when referring to Equation \eqref{Eq:Noise:SDE} we are in fact referring to Equation \eqref{Eq:Noise:Reflecting}.

To be precise when discussing noise-induced tipping, we provide the following definition. 
\begin{definition} 
A noise-induced transition from $\mathcal{O}$ to $\mathcal{S}$, or noise-induced tipping event from $\mathcal{O}$ to $\mathcal{S}$,  is a realization of Equation \eqref{Eq:Noise:SDE} satisfying $(v(0),m(0))=(0,0)$, and there exists $\tau^*\in \mathbb{R}^+$ for which $(v(\tau^*), m(\tau^*))\in \overline{\mathbb{B}(\mathcal{S})}$ and for $\tau<\tau^*$, $(v(\tau),m(\tau))\in \mathbb{B}(\mathcal{O})$. The variable $\tau^*$ is itself a random variable, specifically a stopping time for this process, and is referred to as the tipping time from $\mathcal{O}$ to $\mathcal{S}$.
\end{definition}
We note that similar definition holds for noise-induced transitions from $\mathcal{S}$ to $\mathcal{O}$ and the corresponding tipping time $\tau^*$ from $\mathcal{S}$ to $\mathcal{O}$. Moreover, since the noise is additive, it follows that for systems $\mathbb{P}(\tau^*<\infty)=1$.

In Figure \ref{Fig:Noise:reflecting}(c-d) and Figure \ref{Fig:Noise:reflecting}(e) we plot the time series of $m(\tau)$ for realizations of Equation \eqref{Eq:Noise:SDE} that start start at $\mathcal{O}$ and $\mathcal{S}$ respectively. From these numerical experiments we can obtain further evidence that $\mathcal{O}$ is far more susceptible to noise-induced tipping than $\mathcal{S}$. That is, the expected value of the tipping time from $\mathcal{O}$ to $\mathcal{S}$ is dramatically smaller than from $\mathcal{S}$ to $\mathcal{O}$. Moreover, as seen in Figure \ref{Fig:Noise:reflecting}, the noise-induced tipping events from $\mathcal{O}$ to $\mathcal{S}$ appear to be concentrated about a particular region in phase space. To validate these numerical observations, we will use the Freidlin-Wentzell theory of large deviations to quantify the most probable noise-induced transitions as well as the expected tipping time. %Note, since the noise is additive, it follows that for this system $\mathbb{P}(\tau^*<\infty)=1$. 

\begin{figure}[ht!]
\centering
\begin{subfigure}[b]{0.4\textwidth}
    \centering
    \includegraphics[width=1\textwidth]{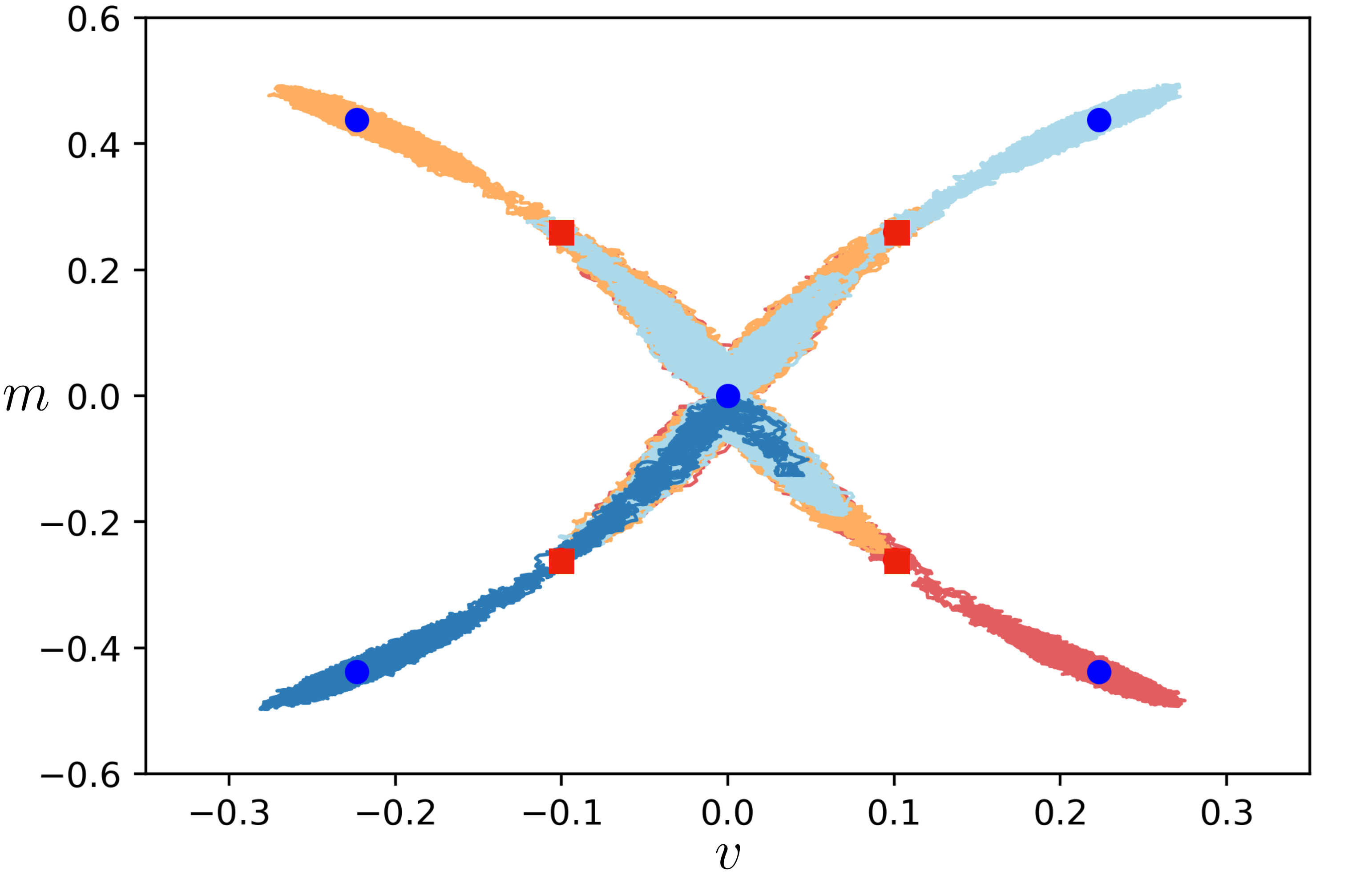}
    \caption{}
\end{subfigure}
\begin{subfigure}[b]{0.4\textwidth}
    \centering
    \includegraphics[width=1\textwidth]{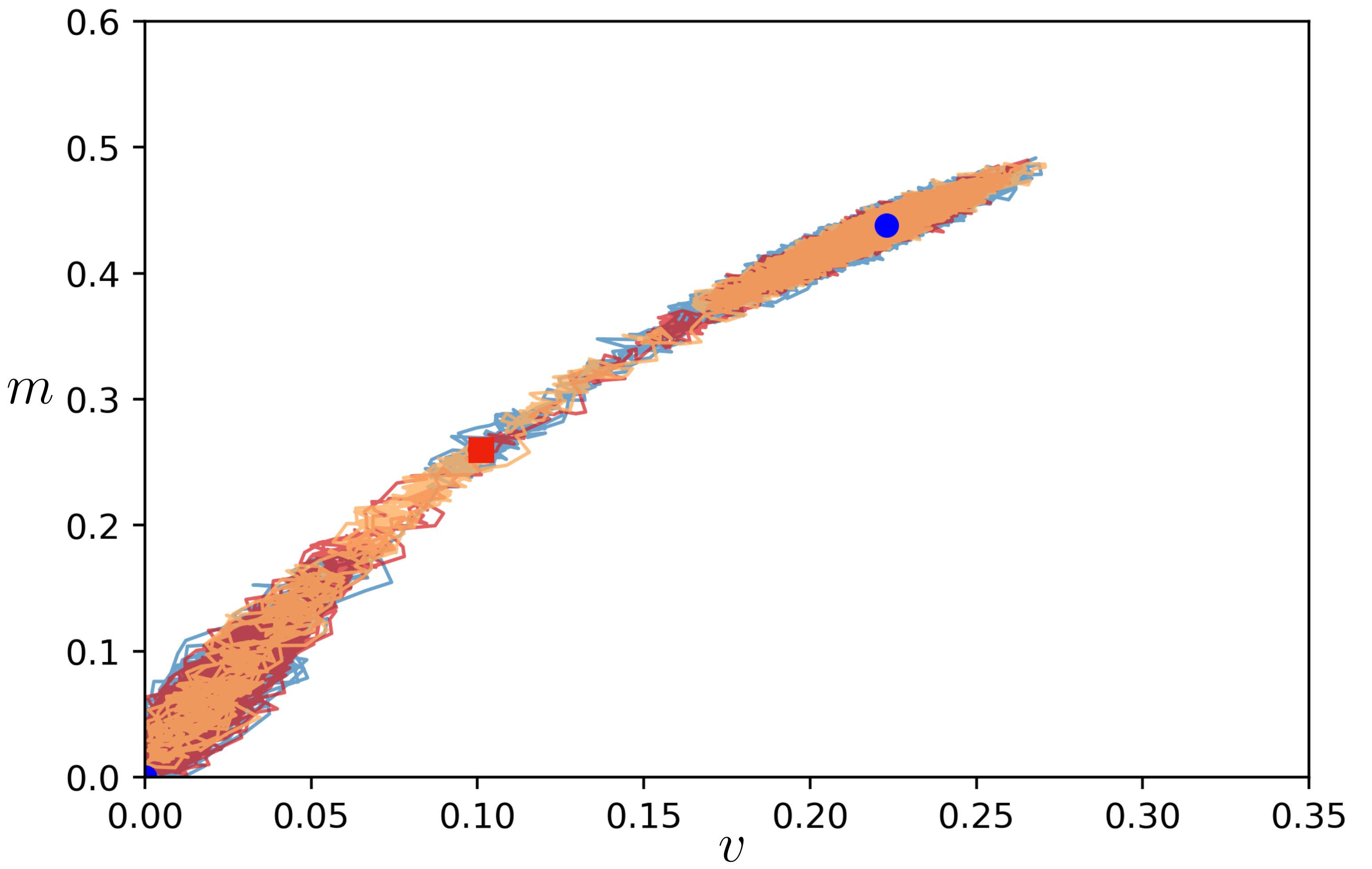}
    \caption{}
\end{subfigure}
    \begin{subfigure}[b]{0.4\textwidth}
    \centering
    \includegraphics[width=1\textwidth]{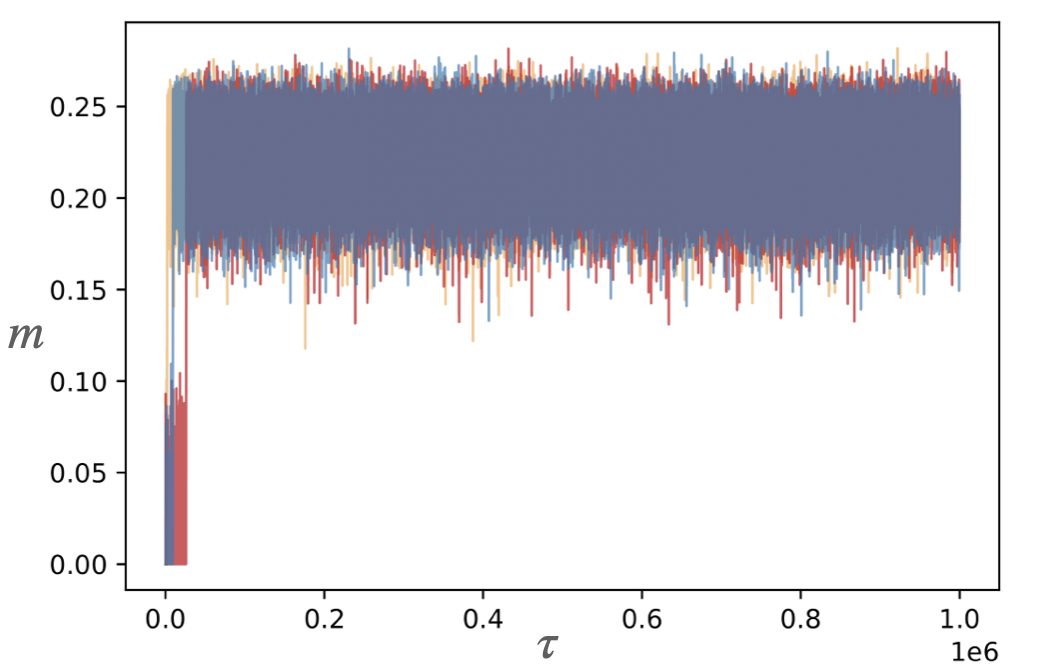}
    \caption{}
\end{subfigure}
\begin{subfigure}[b]{0.4\textwidth}
    \centering
    \includegraphics[width=1\textwidth]
    {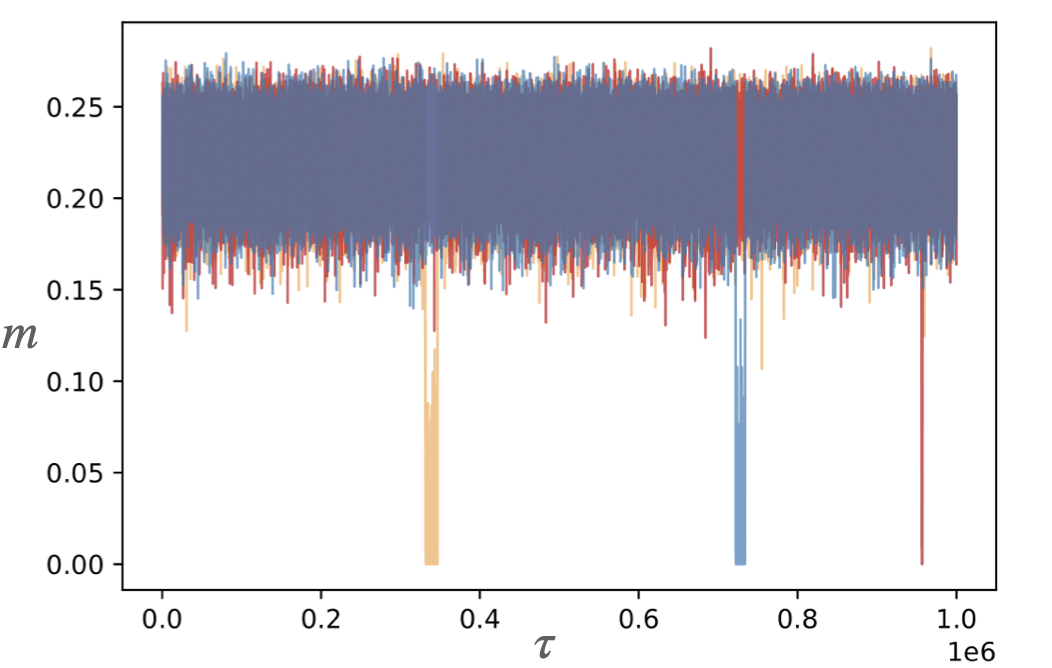}
    \caption{}
\end{subfigure}
\begin{subfigure}[b]{0.4\textwidth}
    \centering
    \includegraphics[width=1\textwidth]
    {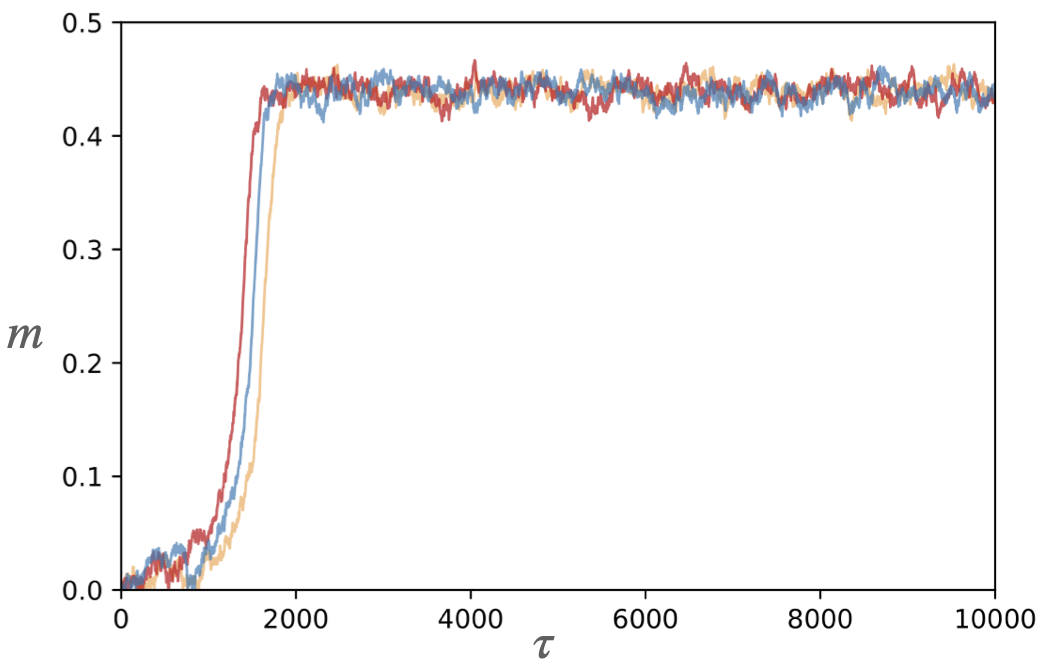}
    \caption{}
\end{subfigure}

    \caption{(a) Realizations of Equation \eqref{Eq:Noise:Reflecting} with $\sigma_1=\sigma_2=0.005, \gamma=0.43$, $V_p=10$, and $c=0.286$. (b) Realizations of Equation \eqref{Eq:Noise:SDE} with reflecting boundary conditions, computed by mapping realizations of Equation \eqref{Eq:Noise:Reflecting} to the first quadrant. In both (a-b), stable fixed points for the deterministic dynamics correspond to blue circles while red squares correspond to the saddles. (c-e) Time series of three realizations of Equation \eqref{Eq:Noise:SDE} with differing values of $c$, with $d \tau=0.1$, and $\tau_f=10^6$. (c) $c=0.286$ with realizations initialized at the non-storm state $\mathcal{O}$. (d) $c=0.286$ with realizations initialized at the stable storm state $\mathcal{S}$. (e) $c=0.22$ with realizations initialized at the non-storm $\mathcal{O}$.}    
    \label{Fig:Noise:reflecting}
\end{figure}

\subsection{A Quick Introduction to Most Probable Transitions}
In this subsection we present the Freidlin-Wentzell theory of large deviations to provide a framework for computing most probable transition paths from $\mathcal{O}$ to $\mathcal{S}$ noting that the same theory can be applied to compute most probable transition paths from $\mathcal{S}$ to $\mathcal{O}$. This framework is presented in Freidlin and Wentzel's book \cite{freidlin2012random},  the review article by Forgoston and Moore \cite{forgoston2018primer}, and the review article by Berglund \cite{berglund2013kramers} (for gradient systems). In particular, reference \cite{galfi2021applications} provides a nice introduction to the Freidlin-Wentzell theory of large deviations within the context of a climate application. To simplify the following exposition, we let $F=(f,g)$ denote the vector field with components $f$ and $g$, introduce the matrix
\begin{equation}
\Sigma=\begin{bmatrix}
\sigma_1^{-2} & 0\\
0 & \sigma_2^{-2}
\end{bmatrix},
\end{equation}
and define for $\mathbf{v}_1,\mathbf{v}_2\in \mathbb{R}^2$ the weighted inner product $\langle \mathbf{v}_1,\mathbf{v}_2\rangle_{\Sigma}=\mathbf{v}_1^{T}\Sigma \mathbf{v}_2$ and the weighted norm $\|\mathbf{v}_2-\mathbf{v}_1\|_{\Sigma}^2=\langle \mathbf{v}_2-\mathbf{v}_1\rangle_{\Sigma}$. We begin with a definition of a most probable path that summarizes and combines the definitions appearing in \cite{heymann2008geometric, freidlin2012random, berglund2013kramers,galfi2021applications}.

\begin{definition}A curve $\Psi(s)=(\psi_1(s),\psi_2(s))$ is a \emph{most probable transition path between $\mathcal{O}$ and $\mathcal{S}$ on the domain $[\tau_0,\tau_f]$} if it minimizes the Freidlin-Wentzell rate functional 
\begin{equation}
    I[\Psi]=\frac{1}{2}\int_{\tau_0}^{\tau_f}\|\dot{\Psi}-F(\Psi)\|_{\Sigma}^2ds,
\end{equation}
over the admissible set
\begin{equation}
\mathcal{A}_{\mathcal{O}}^{(\tau_0,\tau_f)}=\{\Psi\in H^1([\tau_0,\tau_f];\mathbb{R}^2):\Psi(\tau_0)=\mathcal{O} \text{ and } \Psi(\tau_f)=\mathcal{S}\}.
\end{equation}
The \emph{most probable path} $\Psi^*$, if it exists, is the minimizer of the double optimization problem
\begin{equation}
\inf_{[\tau_0,\tau_f]}\inf_{\Psi \in \mathcal{A}_{O}^{(\tau_0,\tau_f)}}I[\Psi]. \label{Eqn:DoubleOpt}
\end{equation}
Note, in terms of notation we represent the most probable transition path as $\Psi=(\psi_1(s),\psi_2(s))$ instead of $(v(s),m(s))$ to distinguish it from a generic realization or the deterministic dynamics. Additionally, note the functional $I$ as defined above also depends on $\tau_0,\tau_f$ but we suppress this dependence to simplify notation. Moreover, we will later show that the minimizer over this double optimization can only be obtained when $\tau_0=-\infty$ and $\tau_f=\infty$. 
\end{definition}

Summarizing the key concepts in \cite{freidlin2012random}, the Freidlin-Wentzell large deviations principle for Equation \eqref{Eq:Noise:SDE} states formally that as  $\sigma_1,\sigma_2\rightarrow 0$ the probability that a realization $(v(\tau),m(\tau))$ of Equation \eqref{Eq:Noise:SDE} remains within a $\delta>0$ neighborhood of $\Psi\in \mathcal{A}_{\mathcal{O}}^{(\tau_0,\tau_f)}$ is given by
\begin{equation}\label{eqn:ldp}
\mathbb{P}\left(\sup_{\tau\in [\tau_0,\tau_f]} \|((v(\tau),m(\tau))-\Psi(\tau)\| <\delta\right) \asymp e^{-I[\Psi]},
\end{equation}
where $\asymp$ denotes logarithm equivalence\footnote{For real sequences $x_{\varepsilon}$, $y_{\varepsilon}$ we say $x_{\varepsilon}\asymp y_{\varepsilon}$ if $\lim_{\varepsilon\rightarrow 0}\frac{\ln(x_{\varepsilon})}{\ln(y_{\varepsilon})}=1$.}. Consequently, in the limit $\sigma_1,\sigma_2\rightarrow 0$, the most probable path $\Psi^*$ can be interpreted as the mode of the probability distribution on $\mathcal{A}_{\mathcal{O}}^{(\tau_0,\tau_f)}$. Additionally, the expected value of the tipping time can be computed from knowledge of the most probable path by the formula
\begin{equation}
\mathbb{E}[\tau^*]\asymp e^{I[\Psi^*]}.\label{Eqn:LDP:Expect_Value}
\end{equation}
 Heuristically, Equation \eqref{Eqn:LDP:Expect_Value} can be justified by letting $p=\exp(-I[\Psi^*])$ approximate the probability of a realization of Equation \eqref{Eq:Noise:SDE} leaving $\mathbb{B}(\mathcal{O})$ in an interval of time $[\tau_0,\tau_f]$.  In the limit $\sigma_1,\sigma_2\rightarrow 0$ it can be shown that $p$ has approximately a geometric distribution, i.e., the realization either leaves $\mathbb{B}(\mathcal{O})$ or returns to $\mathcal{O}$ in the given interval of time, and thus following this logic $\mathbb{E}[\tau^*]\approx 1/p=\exp(I[\Psi^*])$ \cite{berglund2013kramers}.

Equation \eqref{eqn:ldp} indicates that the most probable path $\Psi^*$ defined above corresponds to the curve in phase space in which noise-induced transitions from $\mathcal{O}$ to $\mathcal{S}$ concentrate about in the vanishing noise limit $\sigma_1,\sigma_2\rightarrow 0$. Moreover, the infimum over $[\tau_0,\tau_f]$ can be interpreted as resulting from accounting for all possible parameterizations of the curve. However, note that $I$ vanishes along curves in which $\Psi$ tracks the deterministic dynamics, i.e., $\dot{\Psi}=F(\Psi)$. Consequently, once $\Psi^*$ crosses the separatrix $\partial \mathbb{B}(\mathcal{O})\cap \partial \mathbb{B}(\mathcal{S})$, the most probable path $\Psi^*$ will simply satisfy $\dot{\Psi^*}=F(\Psi^*)$ in this region of phase space. Therefore, we can consider the equivalent optimization problem
\begin{equation}
\inf_{[\tau_0,\tau_f]}\inf_{\Psi \in \overline{\mathcal{A}}_{O}^{(\tau_0,\tau_f)}}I[\Psi], \label{Eqn:DoubleOptInt}
\end{equation}
where 
\begin{equation}
\overline{\mathcal{A}}_{O}^{(\tau_0,\tau_f)}=\{\Psi\in H^1([\tau_0,\tau_f];\mathbb{R}^2):\Psi(\tau_0)=\mathcal{O} \text{ and } \Psi(\tau_f)\in \partial \mathbb{B}(\mathcal{O})\cap \partial \mathbb{B}(\mathcal{S})\}.
\end{equation}

We can reduce the complexity of this optimization problem by proving that we can take $\tau_f=\infty$ in Equation \eqref{Eqn:DoubleOptInt} by studying the corresponding Euler-Lagrange equations and its natural boundary conditions. Taking the first variation of $I$ over $\mathcal{A}_{\mathcal{O}}^*$ and integrating by parts yields
\begin{equation}
\delta I
=\int_{\tau_0}^{\tau_f}\left\langle -\ddot{\Psi}+\nabla F(\Psi)\dot{\Psi}-\Sigma^{-1}\nabla F^T(\Psi)\Sigma(\dot{\Psi}-F(\Psi)),\delta \Psi\right\rangle_{\Sigma}ds+\left.\left\langle\dot{\Psi}-F(\Psi),\delta \Psi\right\rangle_{\Sigma}\right|_{\tau_f}.
\end{equation}
Consequently, the Euler-Lagrange equations are given by
\begin{equation}
\ddot{\Psi}=\nabla F(\Psi)\dot{\Psi}-\Sigma^{-1}\nabla F^T(\Psi)\Sigma(\dot{\Psi}-F(\Psi)), \label{Eqn:Euler-Lagrange}
\end{equation}
with the additional ``natural boundary condition'' that $\dot{\Psi}(\tau_f)=F(\Psi(\tau_f))$ on the separatrix, i.e., $\Psi$ tracks the flow on the separatrix. Therefore, since the separatrix in this problem corresponds to the stable manifold of $\mathcal{U}$ and $\Psi$ can track the flow of $F$ at zero cost, it follows that the infimum of \eqref{Eqn:DoubleOptInt} is obtained when $\tau_f=\infty$, $\Psi^*$ terminates at $\mathcal{S}$, and $\lim_{s\rightarrow \infty}\dot{\Psi}^*(s)=F(\mathcal{S})=0$. 

We now show that we can further reduce the complexity of this problem by assuming $\tau_0=-\infty$. We do this by by putting Equation \eqref{Eqn:Euler-Lagrange} into Hamiltonian form through the Legendre transform $\mathbf{p}=(p_1,p_2)=\Sigma (\dot{\Psi}-F(\Psi))$ \cite{forgoston2018primer}. Through this transformation, we obtain the following Hamiltonian system
\begin{equation}
\begin{aligned}
    \dot{\Psi}&=F(\Psi)+\Sigma^{-1} \mathbf{p},\\
    \dot{\mathbf{p}}&=-\nabla F^T\mathbf{p}, \label{Eqn:Hamiltonian} 
    \end{aligned} 
\end{equation}
with corresponding Hamiltonian
\begin{equation}
H=\frac{1}{2}\|\mathbf{p}\|_{\Sigma^{-1}}^2+\langle F(\Psi),\mathbf{p}\rangle. \label{Eqn:HamiltonianScalar}
\end{equation}
With this change of variables, the Freidlin-Wentzell rate functional transforms into the following simple form:
\begin{equation}
    I[\Psi,\mathbf{p}]=\frac{1}{2}\int_{\tau_0}^{\tau_f}\|\mathbf{p}\|_{\Sigma}^2ds. \label{Eqn:Noise:FunctionalHamiltonian}
\end{equation}
Since the Hamiltonian is conserved along the flow generated by \eqref{Eqn:Hamiltonian} and the most probable path satisfies $\lim_{s\rightarrow \infty}\dot{\Psi}^*(s)=F(\mathcal{S})=0$, it follows immediately that $H=0$ on the most probable path $\Psi^*$. Moreover, since $F(\mathcal{O})=0$ it follows that for the conjugate momentum $\mathbf{p}^*$ corresponding to $\Psi^*$, $\lim_{\tau \rightarrow \tau_0}\mathbf{p}^*(\tau)=0$. That is, $(\Psi^*,\mathbf{p}^*)$, if it exists, is a heteroclinic connection between $(\mathcal{O},0)$ and $(\mathcal{U},0)$ in this Hamiltonian system and thus $\tau_0=-\infty$. 

There are some additional properties of Equation \eqref{Eqn:Hamiltonian} which will aid our later analysis. 
\begin{enumerate}
\item Equation \eqref{Eqn:Hamiltonian} contains an invariant submanifold defined by $\mathbf{p}=0$ on which the system follows the deterministic dynamics $\dot{\Psi}=F(\Psi)$.
\item The fixed points of Equation \eqref{Eqn:Hamiltonian} retains the deterministic fixed points with zero conjugate momentum: $(\Psi,\mathbf{p})=(\mathcal{O},0)$, $(\mathcal{U},0)$, $(\mathcal{S},0)$. 
\item The Jacobian of Equation \eqref{Eqn:Hamiltonian} at the above fixed points is of the form
\begin{equation}
    J(\cdot,0)=\begin{bmatrix}
\nabla F(\cdot) & \Sigma^{-1} \\
0 & -\nabla F^T(\cdot) 
    \end{bmatrix}.
\end{equation}
\end{enumerate}
It follows from these properties that if we let $\lambda_1,\lambda_2$ denote the eigenvalues of $\nabla F(\cdot)$  then $\lambda_1,\lambda_2,-\lambda_1,-\lambda_2$ are eigenvalues of $J(\cdot,0)$. Thus, for every stable (unstable) manifold at $\mathcal{O}$ of the deterministic dynamics there is a corresponding unstable (stable) manifold  

Finally, we conclude this brief overview of the Freidlin-Wentzell theory with a discussion of the numerical technique we use to compute most probable transition paths. Since Equation \eqref{Eq:Noise:SDE} is a low dimensional system with a simple set of fixed points, we will numerically solve the boundary value problem given by Equation \eqref{Eqn:Euler-Lagrange} with the boundary conditions $\Psi(\tau_0)=\mathcal{O}$ and $\Psi(\tau_f)=\mathcal{U}$ by computing steady states of the corresponding gradient flow. Specifically, we introduce an artificial time $s$ and consider the evolution equation $\partial_s \Psi=-\frac{\delta I}{\delta \Psi}$ with Dirchlet boundary conditions:
\begin{equation}
\begin{aligned}
&\frac{\partial \Psi}{\partial s}=\frac{\partial^2 \Psi}{\partial \tau^2}-\nabla F(\Psi)\frac{\partial \Psi}{\partial \tau}+\Sigma^{-1}\nabla F^T(\Psi)\Sigma\left(\frac{\partial \Psi}{\partial \tau}-F(\Psi)\right),\\
&\Psi(s,\tau_0)=\mathcal{O} \text{ and }\Psi(s,\tau_f)=\mathcal{S}.
\end{aligned}\label{Noise:Eqn:GradFlow}
\end{equation}
The rate functional $I$ acts as a Lyapunov functional in the sense that solutions of Equation \eqref{Noise:Eqn:GradFlow} satisfy $\frac{d}{ds}I[\Psi(s,\tau)]\leq 0$ and $\frac{d}{ds}I[\Psi(s,\tau)]=0$ if and only if $\Psi(s,\tau)$ solves the Euler-Lagrange equations \eqref{Eqn:Euler-Lagrange}. Consequently, most probable transition paths $\Psi^*(\tau)$ can be computed as the stationary solutions of Equation \eqref{Noise:Eqn:GradFlow}, i.e., $\lim_{s\rightarrow \infty}\Psi(s,\tau)=\Psi^*(\tau)$. 

%Additional numerical techniques include the geometric minimum action method \cite{heymann2008geometric} as well as specialized techniques for solving the corresponding Hamiltonian-Jacobi equations for which the most probable transition path is a characteristic curve \cite{dahiya2018ordered,yang2019computing}. 

\subsection{Most Probable Transition Paths for the Tropical Cyclone Model}
We now apply the above framework to quantify the susceptibility of $\mathcal{O}$ to noise-induced tipping. In Figure \ref{r} we plot numerical approximations of the most probable transition paths computed as the stationary states of Equation \eqref{Noise:Eqn:GradFlow}. From this figure we do indeed see that the most probable transition path from $\mathcal{O}$ to $\mathcal{S}$ remains close to the center manifold near $\mathcal{O}$. In this subsection we will validate this claim by finding an explicit formula for an approximation of the most probable path near the origin and use Equation \eqref{Eqn:LDP:Expect_Value} to compute a scaling law for the expected tipping time. Specifically, we will use the Hamiltonian formulation to approximate candidates for the heteroclinic orbit that exits $\mathcal{O}$ and terminates at $\mathcal{S}$. 

\begin{figure}[t!]
\centering
\begin{subfigure}[b]{0.4\textwidth}
    \centering
    \includegraphics[scale=.35]{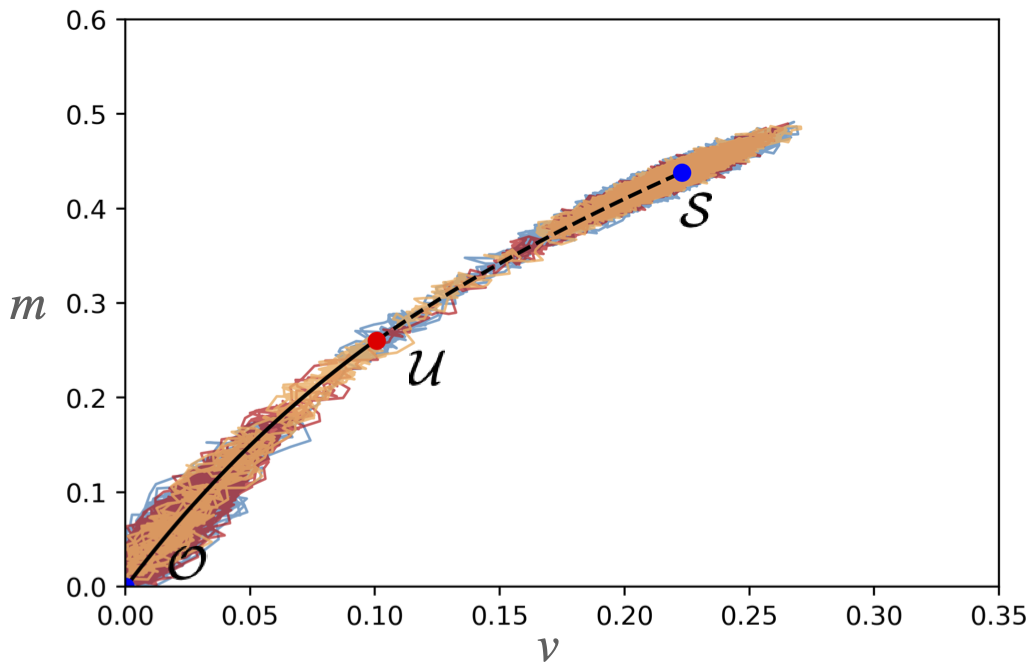}
    \caption{}
\end{subfigure}
\begin{subfigure}[b]{0.4\textwidth}
    \centering
    \includegraphics[scale=.35]{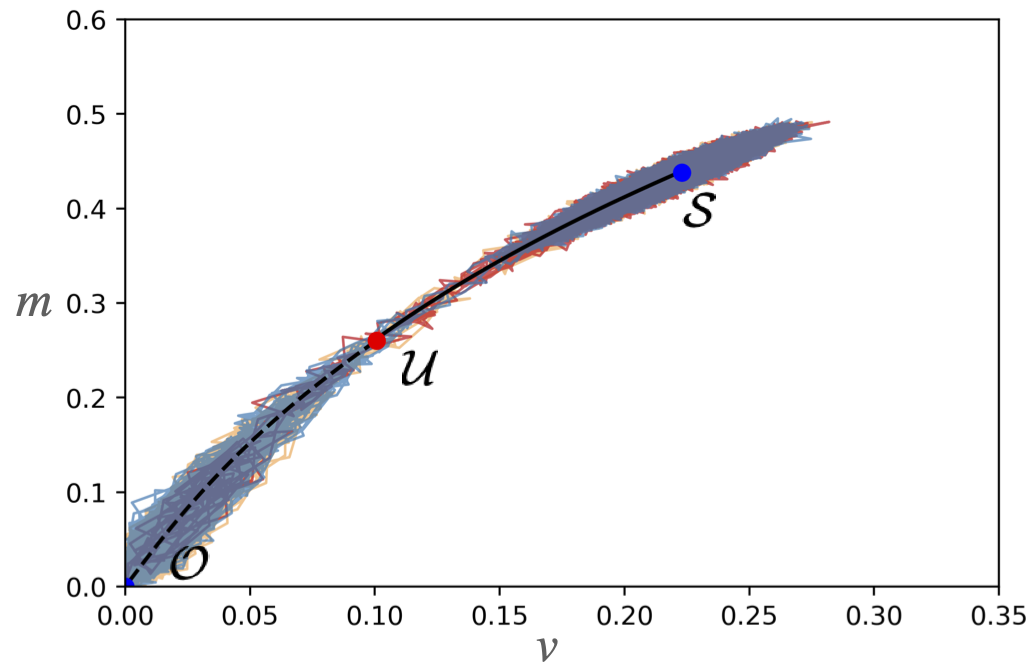}
    \caption{}
\end{subfigure}
\begin{subfigure}[b]{0.4\textwidth}
    \centering
    \includegraphics[scale=.35]{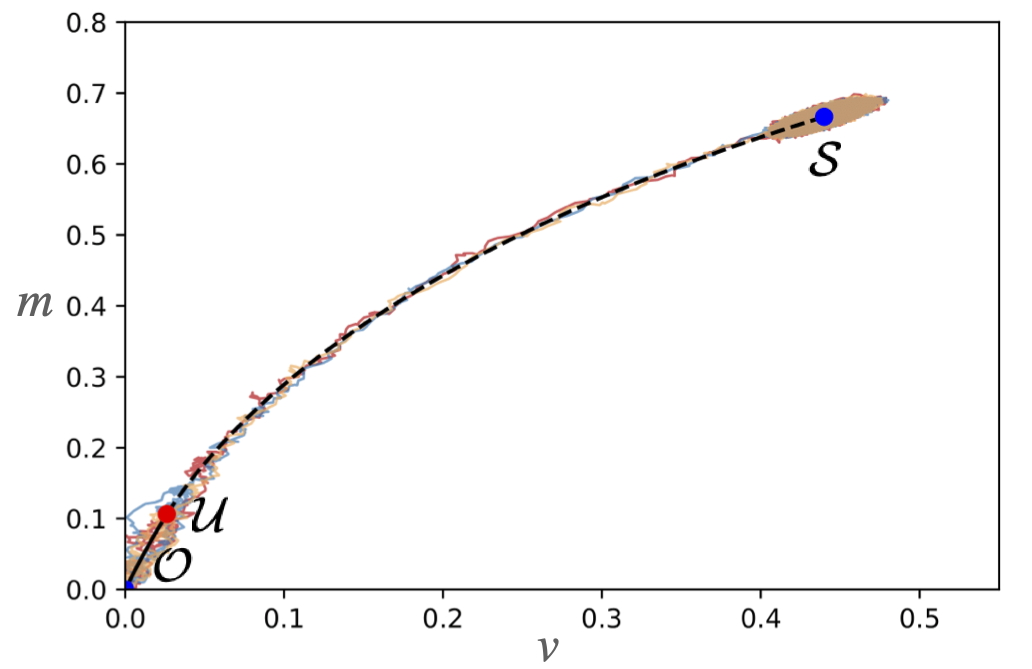}
    \caption{}
\end{subfigure}
    \caption{Plots of the most probable path using a combination of the gradient flow and the deterministic dynamics, overlaid on realizations of Equation \eqref{Eq:Noise:SDE} generated with the Euler-Maruyama method with $\tau_f=10^6$ and  $d \tau=0.1$. The blue circles correspond to the stable fixed points, $\mathcal{O},\mathcal{S}$ and the red square corresponds to the saddle, $\mathcal{U}$. The solid black curve represents the piece of the most probable path from the gradient flow and the dashed black curve represents the piece of the most probable path coming from the deterministic dynamics. Parameter values are set as $\sigma_1=\sigma_2=.005, \gamma=0.43$, $V_p=10$, and $c$ varies per plot.
    (a) $c=0.286$. The most probable path from the non-storm state, $\mathcal{O}$, to the stable storm state, $\mathcal{S}$, overlaid on the tipped realizations.
    (b) $c=0.286$. The most probable path from the stable storm state, $\mathcal{S}$, to the non-storm state, $\mathcal{O}$, overlaid on tipped realizations.
    (c) $c=0.22$. The most probable path from the non-storm state, $\mathcal{O}$, to the stable storm state, $\mathcal{S}$, overlaid on the tipped realizations.}
    \label{r}
\end{figure}
First, we note that the Jacobian of Equation \eqref{Eqn:Hamiltonian} at $\mathcal{O}$ is given by
\begin{equation}
\nabla F (\mathcal{O})=\begin{bmatrix}
    0 & 0 \\
    1 & -c
\end{bmatrix}
\end{equation}
and therefore the eigenvalues of $J(\mathcal{O},0)$ are $\pm c$ and $0$ with $0$ having algebraic multiplicity two and geometric multiplicity one. Consequently, at $(\mathcal{O},0)$ there is a one-dimensional unstable manifold $\mathcal{W}^U$, a one-dimensional stable manifold $\mathcal{W}^S$ which overlaps with the stable manifold for the deterministic dynamics, and a two-dimensional center manifold $\mathcal{W}^C$. Consequently, natural candidates for a heteroclinic orbit lie in $\mathcal{W}^U$ or $\mathcal{W}^C$. However, we numerically found that $\mathcal{W}^U$ does not not intersect the stable manifold of $\mathcal{U}$ and thus we focus on trajectories in $\mathcal{W}^C$.

To begin computing the dynamics on $\mathcal{W}^C$ we perform a standard center manifold reduction. That is, we assume that $(\mathcal{O},0)$ can be locally parameterized as the graph of $(\psi_1,p_1)$, i.e., $\mathcal{W}^C=(\psi_1,\psi_2(\psi_1,p_1),p_1,p_2(\psi_1,p_1))$ where  $\psi_2(\psi_1,p_1)$, $p_2(\psi_1,p_1)$ are analytic functions with power series of the form
\begin{equation}
\begin{aligned}
\psi_2(\psi_1,p_1)&=\sum_{i=1}^{\infty}\sum_{j=0}^{i}a_{i,j}\psi_1^{i-j}p_1^{j},\\
p_2(\psi_1,p_1)&=\sum_{i=1}^{\infty}\sum_{j=0}^{i}b_{i,j}\psi_1^{i-j}p_1^{j}.
\end{aligned} \label{Eqn:PowerSeries}
\end{equation}
To determine the coefficients of the linear terms, note that $\mathcal{W}^C$ is tangent to the plane $E_C=\text{span}\{\mathbf{v}_1,\mathbf{v}_2\}$ where 
\begin{equation}
\mathbf{v}_1=\begin{bmatrix}
    c \\
    1 \\
    0 \\
    0 \\
\end{bmatrix} \text{ and } \mathbf{v}_2=\begin{bmatrix}
    \sigma_1^2/c \\
    0 \\
    1 \\
    0 \\
\end{bmatrix} 
\end{equation}
are, respectively, the eigenvector and generalized eigenvector of the $0$ eigenvalue of $J(\mathcal{O},0)$. Computing the tangent vectors of $\mathcal{W}^C$ in the coordinate directions, we have that
\begin{equation}
\begin{bmatrix}
1\\
a_{1,0}\\
0 \\
b_{1,0}
\end{bmatrix}, 
\begin{bmatrix}
0\\
a_{1,1}\\
1 \\
b_{1,1}
\end{bmatrix}\in \text{span}\left\{\begin{bmatrix}
    c \\
    1 \\
    0 \\
    0 \\
\end{bmatrix},\begin{bmatrix}
    \sigma_1^2/c \\
    0 \\
    1 \\
    0 \\
\end{bmatrix}\right\}
\end{equation}
and thus $a_{1,0}=1/c, a_{1,1}=-\sigma_1^2/c^2$, $b_{1,0}=0$, and $b_{1,1}=0$. 

Since the linear terms in the expansion of $p_2$ were $0$, we need to compute higher order terms to obtain a non-trivial expansion. By the chain rule we have that
\begin{equation}
\begin{aligned}
    \frac{d}{d\tau} \psi_2&=\frac{\partial \psi_2}{\partial \psi_1}\frac{d \psi_1}{d \tau}+\frac{\partial \psi_2}{\partial p_1}\frac{d p_1}{d\tau},\\
    \frac{d}{d\tau} p_2&=\frac{\partial p_2}{\partial \psi_1}\frac{d \psi_1}{d \tau}+\frac{\partial p_2}{\partial p_1}\frac{d p_1}{d\tau},
\end{aligned}
\end{equation}
and thus by Equation \eqref{Eqn:Hamiltonian} we obtain the following system of equations
 \begin{equation}
 \begin{aligned}
g+\sigma_2^2p_2&=\frac{\partial \psi_2}{\partial \psi_1}(f+\sigma_1^2p_1)-\frac{\partial \psi_2}{\partial p_1}\left(\frac{\partial f}{\partial \psi_1}p_1+\frac{\partial g}{\partial \psi_1}p_2\right),\\
-\frac{\partial f}{\partial \psi_2}p_1-\frac{\partial g}{\partial \psi_2}p_2&=\frac{\partial p_2}{\partial \psi_1}(f+\sigma_1^2p_1)-\frac{\partial p_2}{\partial p_1}\left(\frac{\partial f}{\partial \psi_1}p_1+\frac{\partial g}{\partial \psi_1}p_2\right),
\end{aligned}\label{Eqn:InvarianceEqns}
 \end{equation}
where we have suppressed the independent variables to reduce the complexity of the expressions. Therefore, substituting Equation \eqref{Eqn:PowerSeries} into Equation \eqref{Eqn:InvarianceEqns} and equating powers we can obtain linear equations for the undetermined coefficients. Following this procedure, we obtain to cubic order the following approximations
\begin{equation}
\begin{aligned}
\psi_2(\psi_1,p_1)&\approx\frac{1}{c}\psi_1-\frac{(1-\gamma)}{c^5}\psi_1^3-\frac{\sigma_1^2}{c^2}p_1-\frac{3\sigma_1^4}{c^4}p_1^2+\frac{3\sigma_1^2}{c^3}p_1\psi_1,\\
p_2(\psi_1,p_1)&\approx\frac{3(1-\gamma)\sigma_1^4}{c^5}p_1^3+\frac{3(1-\gamma)}{c^3}p_1\psi_1^2.
\end{aligned}\label{Eqn:CenterManifold}
\end{equation}
Note, as expected, on the sub-manifold $\mathbf{p}=0$ we recover the approximation to the center manifold for the deterministic dynamics presented in Section 2. 

To compute a local approximation of the most probable transition path, we now calculate the intersection of the manifold defined by $H=0$ with $\mathcal{W}^C$. To do so, we substitute Equation \eqref{Eqn:CenterManifold} into Equation \eqref{Eqn:HamiltonianScalar} and expand:
\begin{equation}
\begin{aligned}
H(\psi_1,\psi_2(\psi_1,p_1),p_1,p_2(\psi_1,p_1)&=\frac{\sigma_1^2}{2} p_1^2+\frac{\sigma_2^2}{2}p_2^2(\psi_1,p_1)+\langle F(\psi_1,\psi_2(\psi_1,p_1)),(p_1,p_2(\psi_1,p_2)\rangle\\
&\approx\frac{\sigma_1^2}{2}p_1^2-\psi_1^2 p_1.
\end{aligned}
\end{equation}
Consequently, to lowest order, the intersection of the manifold $H=0$ with $\mathcal{W}^C$ corresponds to when $p_1=0$ or $p_1=2\sigma_1^{-2} \psi_1^2$. Therefore, the intersection forms two curves given (locally) by the following parameterizations:
\begin{equation}
\begin{aligned}
(\Psi_1^*(s),\mathbf{p}_1^*(s))&=\left(s,\frac{1}{c}s-\frac{(1-\gamma)}{c^5}s^3,0,0\right),\\
(\Psi_2^*(s),\mathbf{p}_2^*(s))&=\left(s,\frac{1}{c}s-\frac{2}{c^2}s^2-\frac{(1-\gamma)}{c^5}s^3+\frac{6}{c^3}s^3,\frac{2}{c}s^2,\frac{6(1-\gamma)}{c^3 \sigma_1^2}s^4\right).
\end{aligned}
\end{equation}
The curve $(\Psi_1^*(s),\mathbf{p}_1^*(s))$ is simply the local approximation of the center manifold for the deterministic dynamics we found in Section 2. The second curve $(\Psi_2^*(s),\mathbf{p}_2^*(s))$ is the trajectory exiting $(\mathcal{O},0)$ that we are looking for as it is a local approximation of the heteroclinic orbit. Note, in the first two components $(\Psi_1^*,p_1^*)$ and $(\Psi_2^*,p_2^*)$ agree at the linear order and thus, as we suspected, the most probable transition path locally agrees with the center manifold of the deterministic dynamics near the origin. Indeed, in Figure \ref{FIG: overlaid paths} we see that the numerical approximation generated by the gradient flow and $(\Psi_2^*(s),0)$, the projection of the most probable path onto the $\mathbf{p}=0$ plane, are in excellent agreement near $\mathcal{O}$. Furthermore, if we identify the projection $(\Psi_2^*(s),0)$ with its physical coordinates, we obtain the following local approximation to the most probable path
\begin{equation}
m(v)=\frac{1}{c}v-\frac{2}{c^2}v^2-\frac{(1-\gamma)}{c^5}v^3+\frac{6}{c^3}v^3, \label{Eqn:HeteroM_v}
\end{equation}
which, at this order, does not depend on the noise components $\sigma_1,\sigma_2$. 
\begin{figure}[t!]
\centering
\begin{subfigure}[b]{0.45\textwidth}
    \centering
    \includegraphics[width=.9\textwidth]{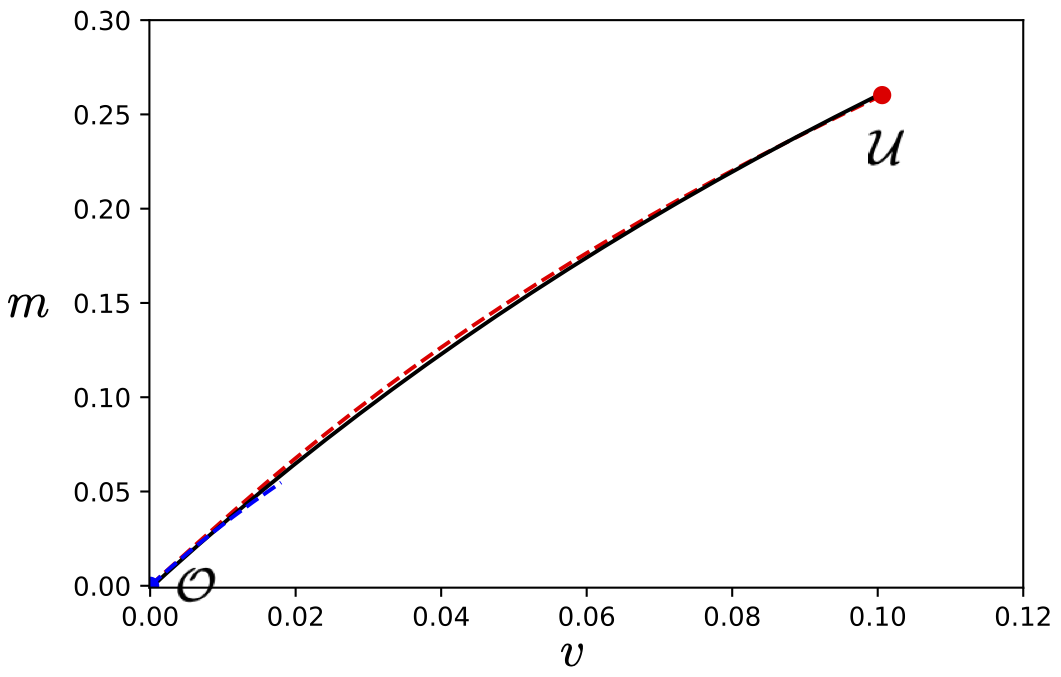}
\end{subfigure}
\begin{subfigure}[b]{0.45\textwidth}
    \centering
\includegraphics[width=.9\textwidth]{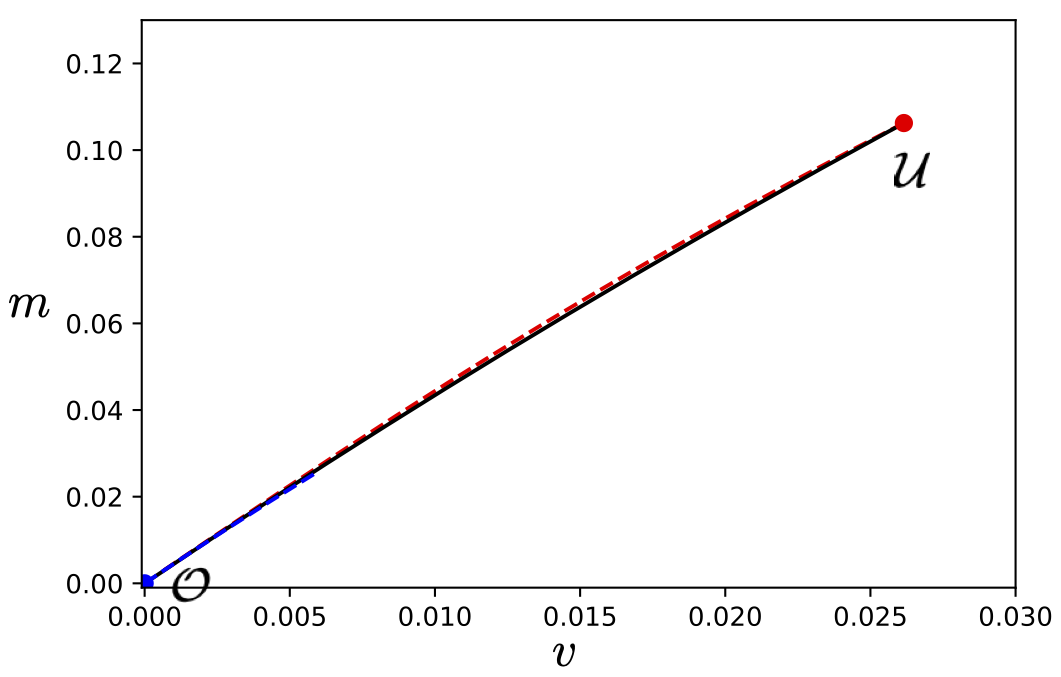}
\end{subfigure}
    \caption{Plots of the most probable path generated from the gradient flow (solid black) overlaid with the unstable manifold of $\mathcal{U}$ (dashed red) and the approximation for the most probable path near $\mathcal{O}$ (dashed blue), for two values of $c$. (a) $c=0.286$. (b) $c=0.22$.}
    \label{FIG: overlaid paths}
\end{figure}

Finally, to obtain an estimate for the expected tipping time we will use the above local approximation to $(\Psi_2^*(s),\mathbf{p}_2^*(s))$, given in physical coordinates by Equation \eqref{Eqn:HeteroM_v}, and Equation \eqref{Eqn:LDP:Expect_Value} to estimate the expected time of leaving a neighborhood near the origin. Specifically, it follows from the asymptotic estimate given by Equation \eqref{Eqn:DeterministicAsyExpansionFP}, that the $v$-coordinate of $\mathcal{U}$ scales like $c^3$ and thus for $0<r<1$ we let $d=rc^3$ serve as a proxy for a typical neighborhood length scale when considering the validity of the approximation given in Equation \eqref{Eqn:HeteroM_v}. If we let $(\Psi^*(s),\mathbf{p}^*(s))$ denote the most probable path satisfying Equation \eqref{Eqn:Hamiltonian}, $\lim_{s\rightarrow -\infty}(\Psi^*(s),\mathbf{p}^*(s))=(\mathcal{O},0)$, and its first component $\psi_1^*$ satisfies $\psi_1^*(0)=d$, then it follows from Equation \eqref{Eqn:Noise:FunctionalHamiltonian} that
\begin{equation}
I[\Psi^*,\mathbf{p}^*]=\int_{-\infty}^0 \left( \left(\frac{p_1^*(s)}{\sigma_1}\right)^2+\left(\frac{p_2^*(s)}{\sigma_2}\right)^2\right)ds.
\end{equation}
Now, if we assume the image of $(\Psi^*(s),p^*(s))$ is locally the same as $(\Psi_2^*(s),\mathbf{p}_2^*(s))$, it follows from Equation \eqref{Eqn:HeteroM_v} that upon changing variables
\begin{equation}
I[\Psi^*,\mathbf{p}^*]=\int_0^{rc^3}\left(\frac{4}{c^2\sigma_1^2}v^4+\frac{36(1-\gamma)^2}{c^6\sigma_1^4\sigma_2^2}v^8\right)\left|\frac{ds}
{dv}\right|dv,
\end{equation}
where by Equation \eqref{Eqn:Hamiltonian} we have that
\begin{equation}
\frac{ds}{dv}=\frac{1}{f(v,m(v))+\frac{2\sigma_1^2}{c}v^2}.
\end{equation}
Therefore, expanding to lowest order, we have that
\begin{equation}
I[\Psi^*,\mathbf{p}^*]=\int_0^{rc^3}\left(\frac{4}{c\sigma_1^2(c-2\sigma_1^2)}v^2+\frac{36(1-\gamma)^2}{c^5\sigma_1^4\sigma_2^2(c-2\sigma_1^2)}v^6\right)dv
\end{equation}
and thus upon integrating we obtain the following scaling law
\begin{equation}
I[\Psi^*,\mathbf{p}^*]= C_1 \frac{c^7}{\sigma_1^2} +C_2\frac{c}{\sigma_2^2}\left(\frac{c^7}{\sigma_1^2}\right)^2,
\end{equation}
where $C_1$, $C_2$ are constants. That is, since $m(v)$ is an approximation for the most probable path, we obtain the following upper bound for the expected escape time from a neighborhood around the origin $\mathcal{O}$:
\begin{equation}
\mathbb{E}[\tau^*]\leq C \exp\left(C_1\frac{c^7}{\sigma_1^2} +C_2\frac{c}{\sigma_2^2}\left(\frac{c^7}{\sigma_1^2}\right)^2\right),\label{Eqn:ExpectedTippingtime}
\end{equation}
where $C>0$ is a sub-exponential correction since Equation \eqref{Eqn:LDP:Expect_Value} is only a logarithmic equivalence.

The scaling law given in Equation \eqref{Eqn:ExpectedTippingtime} is interesting in that it illustrates the interplay between the dimensionless sheer $c$ and $\sigma_1,\sigma_2$. In particular, it provides further evidence for why tipping near the origin is far more common than tipping away from the stable storm state $\mathcal{S}$. Indeed, for the numerical experiments presented in Figure \ref{Fig:Noise:reflecting}, the ratio $c^7/\sigma_1^2$ ranges from (approximately) $1$ to $6$, i.e., is $\mathcal{O}(1)$ despite $\sigma_1=\sigma_2\sim 10^{-3}$.

\section{Discussion} \label{Summary}
An analysis of the various tipping mechanisms for a low-dimensional model of a tropical cyclone show a range of different possibilities for both the formation and destruction of a hurricane. The key results are the following:
\begin{enumerate}
\item The non-storm state $\mathcal{O}$ is a base state that is asymptotically stable for all parameter values. Consequently, there is no possibility of bifurcation-induced tipping away from it to an activated storm state. Furthermore, it does not move in phase space and thus there is no rate-induced tipping that would form a stable storm state from a non-storm state. 
\item The dimensionless wind shear acts as a natural bifurcation parameter: with increasing values of $c$ there is a saddle-node bifurcation in which the stable storm state is eliminated. That is to say that excessive wind shear kills a storm in the sense that it will no longer be able to maintain itself for a prolonged period of time. Thus, in order for the formation of a tropical cyclone to occur, we need sufficiently low wind shear, corroborating physical observations \cite{FR01}.
\item  A necessary condition for the destabilization of the stable storm state, through rate-induced tipping, is that both the potential velocity $V_p$ and dimensionless wind shear $c$ need to be increasing in time. This is counter-intuitive as the energy source is encoded in the maximum potential velocity in this model and thus it would be expected that increasing this quantity might strengthen the cyclone. But we show that, as long as it is accompanied by a sufficient wind shear, it serves to kill the hurricane, and increase of wind shear cannot achieve that alone.  
\item We showed that the non-storm is state is highly susceptible to noise-induced tipping while the stable storm state is robust to random fluctuations. This susceptibility was quantified by the ratio $c^7/\sigma_1^2$ which is a dimensionless measure of the interplay between wind shear and noise. 
\item We identified the most probable transition path for noise-induced tipping from $\mathcal{O}$ closely tracks the center manifold for the deterministic dynamics. That is, the center manifold is a region in phase space that is most vulnerable to random fluctuations. 
\end{enumerate}

As is standard in this type of analysis, we considered these various tipping mechanisms independent of one another. A natural question is how do these various mechanisms couple to induce a storm-state or destabilize a storm. A natural extension of the analysis presented in this paper is to ask how the interplay of a parameter shift and additive noise will affect tipping within the system. Based on the work and analysis in \cite{ritchie2016early} and \cite{slyman2023rate}, we expect that in tipping away from the stable storm state, there will be an interplay between the rate and noise-induced tipping mechanisms, and the additive noise will lower the critical rate needed for tipping. Additionally, tipping should occur away from the non-storm state, but as there was no rate-induced tipping with this initialization, we must explore if there is an interplay of the tipping mechanisms.

Using the same additive noise and ramp parameter as in prior sections, the natural system to study this interaction is given by
\begin{equation}
\begin{aligned}
    dv &= \displaystyle \left(\frac{(1-\gamma) V_p(\Lambda(s))^2}{{V_p^-}^2} m^3-(1-\gamma m^3)v^2\right) d \tau+ \sigma_1 dW_1, \\
    dm &=\left((1-m)v-c(\Lambda(s)) \right)d \tau + \sigma_2 dW_2, \\
    ds &= d\tau.
\end{aligned}
\label{EQ: rate and noise}
\end{equation}
While we would like to use the methods employed in \cite{slyman2023rate} and \cite{fleurantin2023dynamical}, the center manifold of the non-storm state, as described in Section \ref{DetModel}, demands a more careful approach, and is beyond the scope of the current study. Nevertheless, when conducting Monte-Carlo simulations of \eqref{EQ: rate and noise} we noticed some interesting phenomena that have led to further conjectures. First, initializing at the stable storm state corresponding to the start of the ramp function, we see in Figure \ref{fig: rate and noise}(a) that for $r<r_c$, with the addition of noise, there is tipping to the non-storm state. Notice there is an interplay between these two mechanisms. Initializing at the non-storm state corresponding to the start of the ramp, we see in Figure \ref{fig: rate and noise}(b) that there is tipping to the stable storm state. Note, in Figure \ref{fig: rate and noise}(b), realizations that tip actually begin by tipping to the stable storm state corresponding to the start of the ramp function, and then end-point track to the stable storm state corresponding to the end of the ramp function. This implies the two tipping mechanisms are not interacting to induce tipping: noise-induced tipping occurs first, followed by tracking of the stable storm state from the parameter shift. While we illustrate this phenomenon for one set of noise strengths and a rate parameter, this behavior held true for multiple sets of parameter values.

\begin{figure}
\centering
\begin{subfigure}[b]{0.4\textwidth}
    \centering
    \includegraphics[width=.95\textwidth]{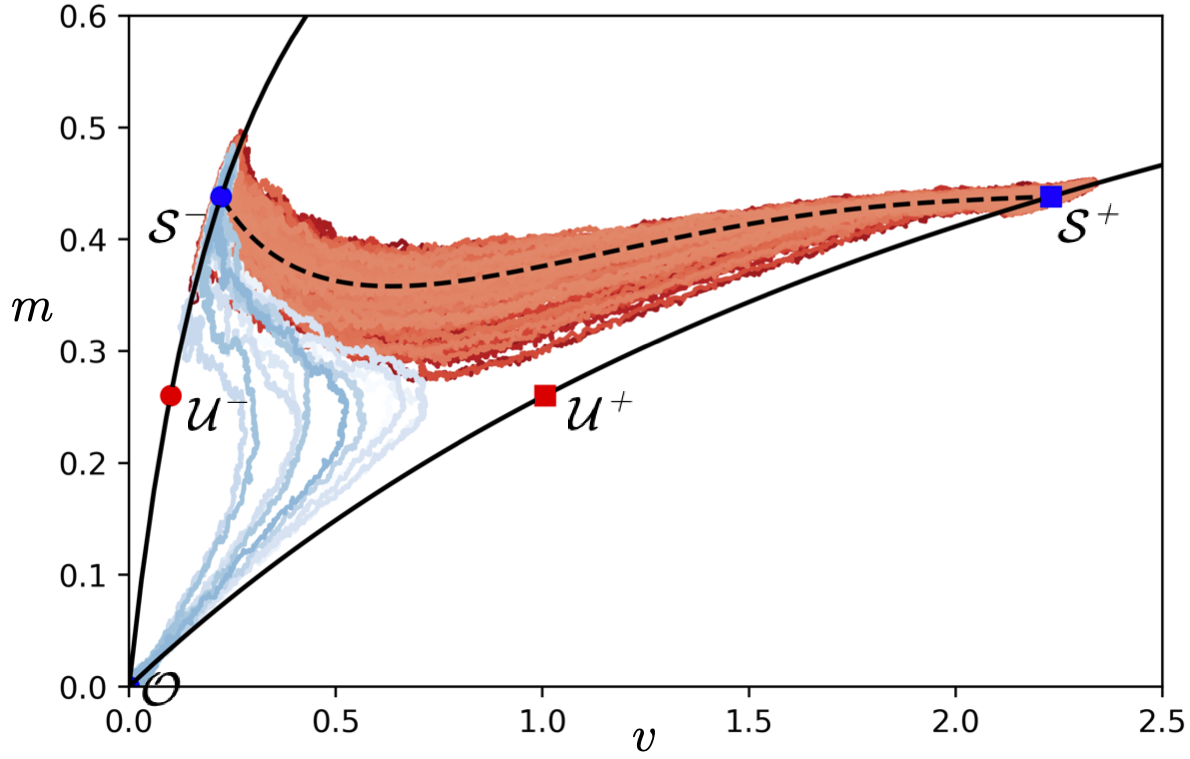}
    \caption{}
\end{subfigure}
\begin{subfigure}[b]{0.4\textwidth}
    \centering
    \includegraphics[width=.95\textwidth]{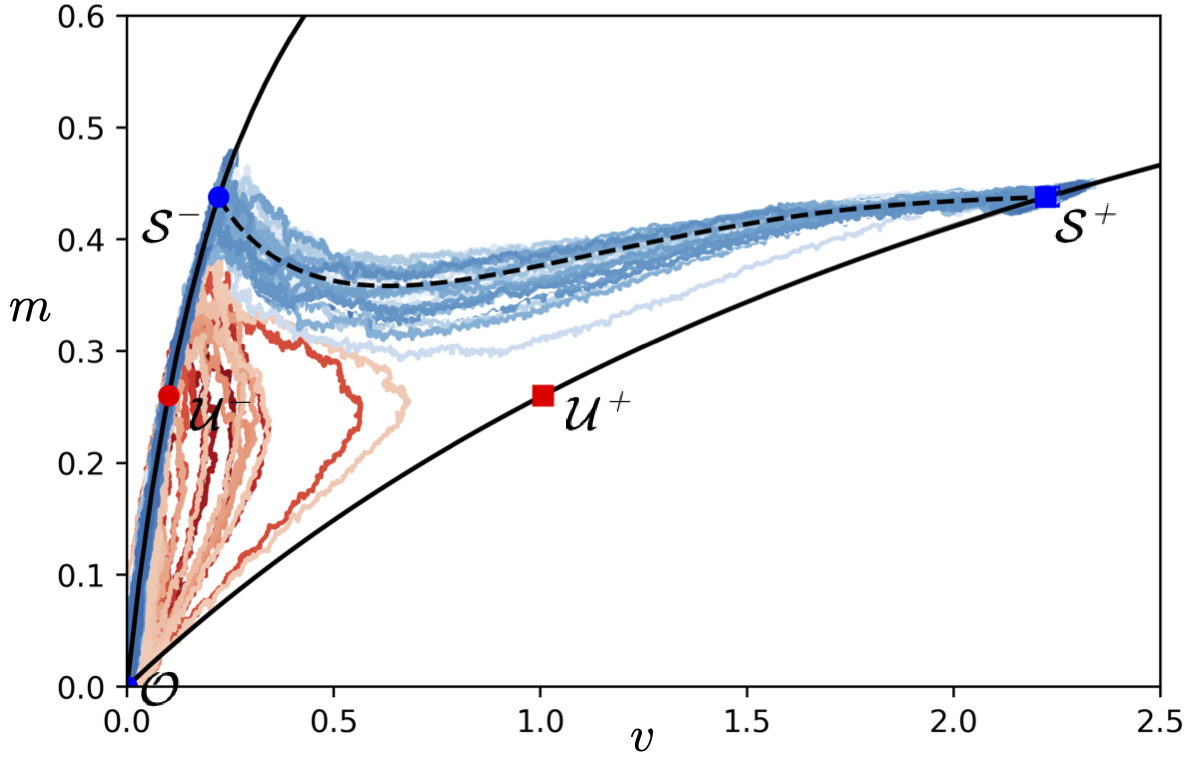}
    \caption{}
\end{subfigure}
    \caption{Realizations of the system given in Equation \eqref{EQ: rate and noise}, solved with the Euler-Maruyama method with $\tau_f=2500, d \tau=0.1$. Parameter values are set as $\sigma_1=\sigma_2=0.005,r=.03,\gamma=0.43$, and $V_p,c$ are time-dependent and defined as in Equations \eqref{ramp_ex}, \eqref{VP_ex}, and \eqref{S_ex}. The three fixed points at the start of the ramp, $\mathcal{O}, U^-, S^-$, correspond to the non-storm state, the unstable storm state, and the stable storm state. These stable fixed points are denoted by blue circles and the saddle node is denoted by a red circle. The three fixed points at the end of the ramp, $\mathcal{O}, U^+, S^+$, correspond to the non-storm state, the unstable storm state, and the stable storm state. These stable fixed points are denoted by blue squares and the saddle node is denoted by a red square. The solid black curves correspond to the $\dot{m}$ nullcline at the start and the end of the ramp function. The dashed black curves correspond to the solution in the deterministic system ($\sigma_1=\sigma_2=0$). 
    (a) 1000 realizations initialized at the stable storm state, $S^-$.  The blue realizations tip from $S^-$ to $\mathcal{O}$. The red realizations end-point track the stable path from $S^-$ to $S^+$, and do not tip. 
    (b) 1000 realizations initialized at the non-storm state, $\mathcal{O}$. The blue realizations tip from $\mathcal{O}$ to $S^+$. The red realizations do not tip.}
    \label{fig: rate and noise}
\end{figure}

To validate the above conjectures requires an adaptation of the standard Freidlin-Wentzell theory of large deviations to non-autonomous systems. One approach is to first follow the procedure in \cite{wieczorek2021compactification} and ``compactify'' the system in such a way that compact invariant sets such as equilibria now describe the long time behavior of the system. Noise-induced tipping can now be studied on the compactified system using the standard Freidlin-Wentzell theory of large deviations. When considering the various tipping phenomenon, this system will now contain multiple scales, e.g. $r$, $c^7/\sigma_1^2$, and $c/c_2^2$, and it would be interesting to understand how these various parameters influence tipping phenomenon. From a variational perspective, $\Gamma$-convergence provides a natural tool for studying the minimizers of the Freidlin-Wentzell functional in various asymptotic limits.

\section*{Acknowledgement}
The authors would like to acknowledge support of the Mathematics and Climate Research Network (MCRN) under NSF grant DMS-1722578. Katherine Slyman and Christopher Jones were additionally supported by Office of Naval Research grant N000141812204.

\appendix
\label{AppendixA}
\section{Center Manifold Approximation of the Origin}
To determine an approximation of the center manifold at $\mathcal{O}$ for Equation \eqref{Eqn:DimensionlessAut} we follow \cite{wiggins2003introduction} and consider a solution of the form
\begin{equation}
m=h(v)=\Sigma_{k=1}^n a_k v^k.
\label{Eqn:center_manifold_form}
\end{equation}
Differentiating Equation \eqref{Eqn:center_manifold_form} with respect to $v$, it follows from Equation \eqref{Eqn:DimensionlessAut} that
\begin{equation}
g(v,h(v))=\frac{dm}{d\tau}=h_v(v)\frac{dv}{d\tau}=h_v(v) f(v,h(v)).
%\label{Eqn:center_manifold_form}
\end{equation}
However from \eqref{Eqn:DimensionlessAut} we also know that $\dot{m}=g(v,h(v))$. Therefore, assuming $n=5$ and equating powers of $v$ in the equation $h_v(v) f(v,h(v))=g(v,h(v))$ we arrive at
\begin{equation}
\begin{aligned}
a_1&=\frac{1}{c}, \\
a_2&=0,\\
a_3&=\frac{\gamma-1}{c^5},\\
a_4&=\frac{2(\gamma-1)}{c^6},\\
a_5&=\frac{6-6c^2-12\gamma+6c^2\gamma-c^4\gamma+6\gamma^2}{c^9}
\end{aligned}
\end{equation}
and thus $m(v)=a_1v+a_2v^2+a_3v^3+a_4v^4+a_5v^5$ is the desired approximation of the center manifold for \eqref{Eqn:DimensionlessAut} near $\mathcal{O}$.  

\section{Stability of the Origin}
From Appendix A, we found a center manifold approximation near $\mathcal{O}$ given by 
\begin{equation}
m(v)=\frac{1}{c} v+\frac{\gamma-1}{c^5} v^3+\frac{2(\gamma-1)}{c^6}v^4 + O(v^5).
\end{equation}
If we consider Equation \eqref{Eqn:DimensionlessAut}, we can use $\frac{dv}{d\tau}$ as a differential equation for the dynamics of the center manifold by replacing $m$ with $m(v)$, resulting in
\begin{equation}
\begin{aligned}
    \frac{dv}{d\tau}&=(1-\gamma)(m(v))^3-(1-\gamma(m(v))^3)v^2\\
    &=-v^2 + O(v^3).
\end{aligned}
\end{equation}
Since the coefficient of $v^2$ is negative, it follows that the origin is an asymptotically stable fixed point for the center manifold and hence for the original system in Equation \eqref{Eqn:DimensionlessAut}.

\section{Proof of Proposition \ref{Prop_FI}}

\begin{proof} We want to show that the box $K_{a,b}=[a_1,b_1]\times[a_2,b_2]$ is forward invariant with respect to the flow, and therefore we need to find the direction of the flow on the four sides of the box.
\begin{itemize} 
\item Side 1 ($v=a_1$, $m\in[a_2,b_2]$):
\begin{equation}
\begin{aligned}
\dot{v}&=(1-\gamma)\left(\frac{V_p}{V_p^-}\right)^2m^3-(1-\gamma m^3)a_1^2 \\
&=m^3\left((1-\gamma)\left(\frac{V_p}{V_p^-}\right)^2+\gamma a_1^2\right)-a_1^2 \\
&\geq a_2^3\left((1-\gamma)\left(\frac{V_p}{V_p^-}\right)^2+\gamma a_1^2\right)-a_1^2 \\
&>0, 
\end{aligned}
\end{equation}
since $\sqrt[3]{\frac{a_1^2}{{(1-\gamma)}\left(\frac{V_p}{V_p^-}\right)^2+\gamma a_1^2}}<a_2$.
\item Side 2 ($v\in[a_1,b_1]$, $m=a_2)$:
\begin{equation}
\begin{aligned}
\dot{m}&=(1-a_2)v-ca_2\\
&\geq (1-a_2)a_1-ca_2 \\
&=a_1-(a_1+c)a_2\\
&>0,
\end{aligned}
\end{equation}
since $a_2<\frac{a_1}{a_1+c}$.

\item Side 3 ($v=b_1$, $m\in[a_2,b_2]$):
\begin{equation}
\begin{aligned}
\dot{v}&=(1-\gamma)\left(\frac{V_p}{V_p^-}\right)^2m^3-(1-\gamma m^3)b_1^2 \\
&=m^3\left((1-\gamma)\left(\frac{V_p}{V_p^-}\right)^2+\gamma b_1^2\right)-b_1^2 \\
&\leq b_2^3\left((1-\gamma)\left(\frac{V_p}{V_p^-}\right)^2+\gamma b_1^2\right)-b_1^2 \\
&<0, 
\end{aligned}
\end{equation}
since  $b_2<\sqrt[3]{\frac{b_1^2}{{(1-\gamma)}\left(\frac{V_p}{V_p^-}\right)^2+\gamma b_1^2}}$.
\item Side 4 ($v\in[a_1,b_1]$, $m=b_2$):
\begin{equation}
\begin{aligned}
\dot{m}&=(1-b_2)v-cb_2\\
&\leq (1-b_2)b_1-cb_2 \\
&=b_1-(b_1+c)b_2\\
&<0,
\end{aligned}
\end{equation}
since $\frac{b_1}{b_1+c}<b_2$.
 \end{itemize}
\end{proof}

\bibliographystyle{elsarticle-num}

%\bibliography{ref.tex}

\end{document}